\newtheorem{theorem}{Theorem}
\newtheorem{definition}{Definition}
\newtheorem{lemma}{Lemma}
\newtheorem{remark}{Remark}
\begin{document}
	
\title[Double Degenerate Fast Diffusion]{Evolution of Interfaces for the Nonlinear Double Degenerate Parabolic Equation of Turbulent Filtration with Absorption. II. Fast Diffusion Case}

	\author[U. G. Abdulla]{Ugur G. Abdulla}
	\address{Department of Mathematical Sciences, Florida Institute of Technology, Melbourne, FL 32901}
	\email{abdulla@fit.edu}	
	
	\author[A. Prinkey]{Adam Prinkey}
           \author[M. Avery]{Montie Avery}

\maketitle
\begin{center}
Department of Mathematical Sciences\\ Florida Institute of Technology, Melbourne, FL 32901
\end{center}
\begin{abstract}
We prove the short-time asymptotic formula for the interfaces and local solutions near the interfaces for the nonlinear double degenerate reaction-diffusion equation of turbulent filtration with fast diffusion and strong absorption
\[ u_t=(|(u^{m})_x|^{p-1}(u^{m})_x)_x-bu^{\beta}, \, 0<mp<1, \, \beta >0. \]
Full classification is pursued in terms of the nonlinearity parameters  $m, p,\beta$ and asymptotics of the initial function near its support. In the case of an infinite speed of propagation of the interface, the asymptotic behavior of the local solution is classified at infinity. A full classification of the short-time behavior of the interface function and the local solution near the interface for the slow diffusion case ($mp>1$) was presented in {\it Abdulla et al., Math. Comput. Simul., 153(2018), 59-82}.  
\end{abstract}
\section{Introduction}
Consider the Cauchy problem (CP) for the nonlinear double degenerate parabolic equation
\begin{equation}\label{OP1}
Lu\equiv u_t-(|(u^{m})_x|^{p-1}(u^{m})_x)_x+bu^{\beta} = 0,  \, x\in \mathbb{R}, \, 0<t<T,
\end{equation}
\begin{equation}\label{IF1}
u(x,0)=u_0(x),\ x\in \mathbb{R}.
\end{equation}
where $u=u(x,t), m, p, \beta >0, b\in\mathbb{R},\, \text{with } 0<mp<1, \, \text{and } T\leq +\infty$ and $u_0$ is nonnegative and continuous. Throughout the paper we assume that either $b\geq 0$ or $b<0$ and $\beta\geq 1$ (see Remark~\ref{nonunique}).
Equation \eqref{OP1} arises in turbulent polytropic filtration of a gas in a porous medium \cite{Barenblatt1,Barenblatt2,Esteban,Leibenson}.  The condition $0<mp<1$ corresponds to 
the fast diffusion regime, when the equation \eqref{OP1}with $b=0$ possesses an infinite speed of propagation property \cite{Barenblatt1}.  
The main constituent of the equation \eqref{OP1} is to model competition between the double degenerate fast diffusion with infinite speed of propagation property and the absorption or reaction term. Assume that $\eta(0)=0$, where $\eta(\cdot)$ is an interface or free boundary defined as
\[\eta(t):=\text{sup}\{x:u(x,t)>0\}.\]
Furthermore, we shall assume that
\begin{equation}\label{IF2}
u_0(x)\sim C(-x)_+^{\alpha},\, \text{as } \, x\rightarrow 0^{-},\, \text{for some }  C>0,\, \alpha>0.
\end{equation}
where $(\cdot)_+=\max(\cdot; 0)$. Solution of the CP is understood in the weak sense. In Section~\ref{sec3}, we recall the definition of the weak solution (Definition~\ref{def: weak soln}) and the main results of the general theory.

 The aim of the paper is to classify short-time behavior of the interfaces and local solutions near the interfaces and at infinity in a CP with a compactly supported initial function. In all cases when $\eta(t)<+\infty$ we classify the short-time asymptotic behavior of the interface $\eta(\cdot)$, and local solution near $\eta(\cdot)$, while in all cases with $\eta(t)=+\infty$ we classify the short-time asymptotic behavior of the solution as $x\to+\infty$.  Classification is pursued in terms of parameters $m,p,b, \beta, C,$ and $\alpha$. 

Most of the results of the paper are local. Therefore, the behavior of $u_0(x)$ as $x\rightarrow -\infty$ is irrelevant, and we can assume that~$u_0$~ is either bounded or unbounded with growth condition as ~$x\rightarrow -\infty,$ which is suitable for the existence of the solution. In some cases we will consider the special case
\begin{equation}\label{IF3}
u_0(x)=C(-x)_+^{\alpha},\, x\in \mathbb{R},
\end{equation}
specifically when the solution to \eqref{OP1}, \eqref{IF3} is of self-similar form; in these cases the estimations will be global in time. 

A full classification of the small-time behavior of $\eta(t)$ and of the local solution near ~$\eta(t)$ depending on the parameters ~$m,p,b,\beta, C,$~and~$\alpha$ in the case of slow diffusion ($mp>1$) is presented in a recent paper \cite{AbdullaPrinkey1}. A similar classification for the reaction-diffusion equation \eqref{OP1} with $p=1$
is presented in \cite{Abdulla1} for the slow diffusion case ($m>1$), and in \cite{Abdulla3} for the fast diffusion case ($0<m<1$). The methods of the proof developed in \cite{Abdulla1,Abdulla3} are based on nonlinear scaling laws, and a barrier technique using special comparison theorems in irregular domains with characteristic boundary curves \cite{Abdulla4,Abdulla5,Abdulla6,Abdulla7}. Full classification of interfaces and local solutions near the interfaces and at infinity for the $p$-Laplacian type reaction-diffusion equation (\eqref{OP1} with $m=1$) are presented in \cite{AbdullaJeli1,AbdullaJeli2}. 
The semilinear case ($m=p=1$ in \eqref{OP1}) was analyzed in \cite{Grundy1,Grundy2}. It should be noted that the semilinear case is a singular limit of the general case. For instance, if $0<\beta<1,\; mp>\beta,\; \alpha<\frac{1+p}{mp-\beta}$, then the interface initially expands and if $mp>1$ then \cite{AbdullaPrinkey1}
\[\eta(t)\sim C_1t^{1/(1+p-\alpha(mp-1))}~~\text{as}~t\rightarrow 0^+,\]
while if $mp<1,$ we prove below that
\[\eta(t)\sim C_2t^{(mp-\beta)/[(1+p)(1-\beta)]}~~\text{as}~t\rightarrow 0^+.\]
Formally, as ~$m\rightarrow 1, p\rightarrow 1$ both estimates yield a false result, and from \cite{Grundy2} it follows that if $m=p=1$,~then 
\[\eta(t)\sim C_3(t~\text{ log}~ 1/t)^{\frac{1}{2}}\]
($C_i, i=\overline{1,3}$~ are positive constants).

The organization of the paper is as follows. In Section \ref{sec2} the main results are outlined, with further details in Section \ref{sec2a}. Essential lemmas are formulated and proven using nonlinear scaling in Section \ref{sec3}. Finally, in Section \ref{sec4}, the results of Sections \ref{sec2} are proved. To improve readability, explicit values of all constants that appear in Sections \ref{sec2}, \ref{sec2a}, and \ref{sec4} are relegated to the Appendix. 
 
 \begin{remark}\label{nonunique} The case $b<0, 0<\beta<1$ is not considered in this paper due to the fact that in general, uniqueness and comparison theorems don't hold for the solutions of the Cauchy problem \eqref{OP1},\eqref{IF1}. It should be pointed out that the methods of this paper can be applied to identify asymptotic properties of the minimal solution at infinity in this case. The methods of this paper can be applied to similar problem for the non-homogeneous reaction-diffusion equations with space and time variable dependent power type coefficients (\cite{shmarev2015interfaces}). It should be also mentioned that modification of the method can be applied to radially symmetric solutions of the multidimensional double degenerate reaction-diffusion equation
\[ u_t= div(|\nabla u^m|^{p-1}\nabla u^m)+bu^\beta. \]
\end{remark}

\section{Description of the main results}\label{sec2}

Throughout this section we assume that $u$ is a unique weak solution of the CP \eqref{OP1}-\eqref{IF2}. There are five different subcases, as shown in Figure \ref{Figure1}. The main results are outlined below in Theorems \ref{theorem 1}\,-\,\ref{theorem 5} corresponding directly to the cases I-V, respectively, in Figure \ref{Figure1}.
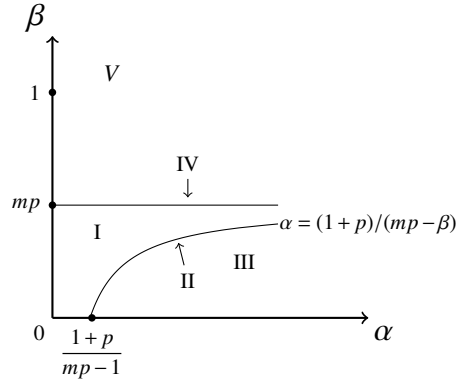
\begin{figure}[h!]
		\begin{center}	
		{\scalefont{0.75}
\begin{tikzpicture}[xscale=1.5, yscale=0.5]
\draw[->, thick] (0,0)--(2.8,0);
\draw[->,thick] (0,0)--(0,7.5);
\node[below left] at (0,0) {$0$};
\node[fill, shape=circle, label=180:{$mp$}, inner sep=1pt] at (0,3) {};
\node[fill, shape=circle, label=180:{$1$}, inner sep=1pt] at (0,6) {};
\node[fill, shape=circle, label=-90:{$\displaystyle  \frac{1+p}{mp-1}$}, inner sep=1pt] at (.35,0) {};
\node[left] at (0,8) {\scalefont{1.5} $\beta$};
\node[below] at (2.9,0) {\scalefont{1.5} $\alpha$};
\draw[domain=0:2.5, smooth] plot ({1/(3-\x)},\x);
\node at (2.8,2.5) {$\alpha =(1+p)/(mp-\beta)$};
\node at (0.5,6.5) {\scalefont{1.1} $V$};
\node (1) at (0.4,2.3) {I};
\node (2) at (1.2,1) {II};
\node (3) at (1.7,1.5) {III};
\node (4) at (1.2,4.1) {IV};
\node (*) at (1.2,3) {};
\node (**) at (.71,1.5) {};
\node (***) at (1.1,2.2) {};
\draw[ ->][ left]
  (4) edge (*);
   \draw[ ->][ left]
  (2) edge (***);
\draw (0,3)--(2,3);
\draw ( (0.5,1) {};
\end{tikzpicture}
}
		\caption{Classification of different cases in the ($\alpha$,$\beta$) plane for interface development in problem \eqref{OP1}-\eqref{IF2}.}
		\label{Figure1}
		\end{center}
		\end{figure}
\begin{theorem}\label{theorem 1} 
Let $b>0, 0 < \beta < mp$ and $0 < \alpha < (1+p)/(mp-\beta)$. The interface initially expands and there exists a number $\delta > 0$ such that
\begin{equation}
\zeta_1 t^\frac{mp-\beta}{(1+p)(1-\beta)} \leq \eta(t) \leq \zeta_2 t^\frac{mp-\beta}{(1+p)(1-\beta)}, \, 0 \leq t \leq \delta, \label{result2}
\end{equation}
(see the Appendix for explicit values of $\zeta_1$ and $\zeta_2$). Moreover, for any $\rho \in \mathbb{R}$, there is a number $f(\rho) > 0$ (depending on $C$, $m$, and $p$)  such that
\begin{equation}\label{sim1}
u(\xi_\rho (t), t) \sim f(\rho) t^\frac{\alpha}{1+p - \alpha (mp -1)},\, \text{as } t\rightarrow 0^{+},
\end{equation}
where $\xi_\rho (t) = \rho t^\frac{1}{1+p - \alpha (mp-1)}$.
\end{theorem}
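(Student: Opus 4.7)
\medskip

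\noindent\textbf{Proof plan.} The proof is based on two competing self-similar scalings associated with \eqref{OP1}. The reaction--diffusion balance, obtained by equating $u_t$, the double degenerate diffusion term, and the absorption $bu^{\beta}$, yields the time exponent $(mp-\beta)/[(1+p)(1-\beta)]$, which we expect to govern the interface. The pure-diffusion balance inherited from \eqref{IF2}, obtained by equating $u_t$ and the diffusion term on the initial profile $u_0\sim C(-x)_+^{\alpha}$, yields the time exponent $1/[1+p-\alpha(mp-1)]$, which we expect to govern the local value inside the support. A short algebraic manipulation shows that the hypothesis $\alpha<(1+p)/(mp-\beta)$ of case~I is equivalent to $(mp-\beta)/[(1+p)(1-\beta)]<1/[1+p-\alpha(mp-1)]$, so that for small $t>0$ the ray $\xi_\rho(t)$ lies deep inside $\{u>0\}$. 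Heuristically, strong absorption halts the nominally infinite propagation speed of fast diffusion at the reaction--diffusion scale, whereas at the ray $\xi_\rho(t)$ the evolution is essentially governed by pure fast diffusion.

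To prove the interface bounds \eqref{result2} we construct sub- and supersolutions of the self-similar form
\[
w_\pm(x,t)=A_\pm\,t^{1/(1-\beta)}\bigl(\zeta_\pm - x\,t^{-(mp-\beta)/[(1+p)(1-\beta)]}\bigr)_+^{(1+p)/(mp-\beta)},
\]
where the front exponent $(1+p)/(mp-\beta)$ is dictated by the requirement that the leading singular terms in $Lw_\pm$ at the free boundary cancel. Substitution then reduces the one-sided inequality $Lw_+\ge 0\ge Lw_-$ to a purely algebraic condition on $(A_\pm,\zeta_\pm)$, which can be met by appropriate choice of the constants. Using \eqref{IF2} on the initial line and matching on the lateral (characteristic) boundary, we arrange $w_+\ge u$ on the parabolic boundary of the wedge $\{x\ge\ell_0\,t^{(mp-\beta)/[(1+p)(1-\beta)]}\}$ and, symmetrically, $w_-\le u$ on the parabolic boundary of a slightly wider wedge. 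The weak comparison principle in irregular domains with a characteristic boundary recalled in Section~\ref{sec3} (see also \cite{Abdulla4,Abdulla5,Abdulla6,Abdulla7}) then pins $\eta(t)$ between the free boundaries of $w_-$ and $w_+$, yielding \eqref{result2} with $\zeta_1=\zeta_-$ and $\zeta_2=\zeta_+$.

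To prove the pointwise asymptotic \eqref{sim1} we introduce the nonlinear rescaling
\[
u_\lambda(y,s)=\lambda^{-a}\,u\bigl(\lambda^{b}y,\,\lambda s\bigr),\qquad a=\frac{\alpha}{1+p-\alpha(mp-1)},\quad b=\frac{1}{1+p-\alpha(mp-1)},
\]
which is the symmetry group of the purely diffusive double degenerate equation compatible with \eqref{IF2}. A direct computation gives
\[
(u_\lambda)_s-\bigl(|(u_\lambda^m)_y|^{p-1}(u_\lambda^m)_y\bigr)_y+b\,\lambda^{\,1-a(1-\beta)}\,u_\lambda^{\beta}=0,
\]
and $1-a(1-\beta)>0$ exactly when $\alpha<(1+p)/(mp-\beta)$, so the absorption term vanishes as $\lambda\to 0^+$. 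By \eqref{IF2} the rescaled initial trace $\lambda^{-a}u_0(\lambda^{b}y)$ tends to $C(-y)_+^{\alpha}$ locally uniformly, and the a priori $L^{\infty}$ and H\"older estimates collected in Section~\ref{sec3} make $\{u_\lambda\}$ relatively compact in $C_{\mathrm{loc}}$. Any cluster point $w$ is a weak solution of the pure double degenerate fast diffusion CP with datum $C(-y)_+^{\alpha}$; uniqueness for this CP upgrades compactness to convergence of the entire family, and the scale invariance of the limit problem forces $w(y,s)=s^{a}f(y\,s^{-b})$ for some continuous profile $f$. Writing $u(\xi_\rho(\lambda),\lambda)=\lambda^{a}u_\lambda(\rho,1)$ and letting $\lambda=t\to 0^+$ yields \eqref{sim1} with $f(\rho):=w(\rho,1)$, which depends only on $C$, $m$, $p$.

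The main technical obstacle is the construction and verification of the self-similar barriers $w_\pm$. Since $mp<1$, the spatial derivatives of $w_\pm$ blow up at the free boundary, so the inequality $Lw_\pm\gtrless 0$ must be interpreted in the weak sense and comparison must be performed in a domain whose lateral boundary is itself a free boundary, characteristic and only H\"older-regular. A secondary point in the rescaling step is verifying that $u_\lambda$ attains its limit initial trace in a sense strong enough for the stability of weak solutions of the limit (pure diffusion) CP; this uses \eqref{IF2} together with the quantitative continuity near $t=0$ recalled in Section~\ref{sec3}.
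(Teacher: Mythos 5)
Your outline follows the paper's strategy in broad strokes (self-similar barriers with front exponent $(1+p)/(mp-\beta)$ for \eqref{result2}, and the rescaling $u_\lambda$ with vanishing absorption for \eqref{sim1} — your exponents $a,b$ and the coefficient $\lambda^{1-a(1-\beta)}$ all check out and agree with the paper's Lemma~\ref{Lemma 2}), but two steps as stated would fail. First, your claim that ``uniqueness for this CP upgrades compactness to convergence of the entire family'' invokes a theorem that does not exist: for $0<mp<1$ uniqueness of the Cauchy problem with the unbounded datum $C(-y)_+^{\alpha}$ is an open problem, as the paper states explicitly in Section~\ref{sec3}. The paper circumvents this by working with the \emph{minimal} solution (Definition~\ref{def: minimal soln}, Lemmas~\ref{Lemma 1}--\ref{Lemma 2}): the profile $f$ in \eqref{sim1} is defined through the minimal solution $w$ of the $b=0$ problem, self-similarity of $w$ is forced by the scaling-plus-minimality trick (as in \eqref{U12}), and the limit of the rescaled family is identified via a sandwich between solutions with data $(C\pm\epsilon)(-x)_+^{\alpha}$ on bounded domains, where uniqueness and comparison are available. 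Your step needs to be replaced by this machinery, not by an appeal to uniqueness.

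Second, and more seriously for \eqref{result2}: you assert $w_+\geq u$ on the lateral boundary of the wedge $\{x\geq \ell_0 t^{(mp-\beta)/[(1+p)(1-\beta)]}\}$ with no indication of what controls $u$ there. Since $mp<1$ the speed of propagation of the unperturbed equation is infinite, so $u>0$ along that curve, and \eqref{IF2} gives nothing: the initial trace on the wedge is identically zero. This is precisely where the fast-diffusion case departs from the slow-diffusion paper you are implicitly transplanting, where $u$ vanishes outside a compact set and the lateral matching is trivial. The paper supplies the missing ingredient by first proving the global bound \eqref{upper bound}, $u\leq Dt^{1/(1-mp)}x^{(1+p)/(mp-1)}$, using the explicit singular solution of the $b=0$ equation as a supersolution; along the lateral curve this bound scales exactly like $t^{1/(1-\beta)}$ (see \eqref{fix1}), which is what allows $\ell_0$ and $\zeta_2$ to be chosen so that the barrier dominates $u$ there. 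Without this (or an equivalent uniform decay estimate — the pointwise, non-uniform relation \eqref{sim1} does not suffice, since the curve corresponds to $\rho\to+\infty$), your supersolution comparison cannot be closed. A minor related point: for the subsolution matching the paper compares on the line $x=0$ using \eqref{fpr1}, i.e.\ \eqref{sim1} at $\rho=0$, so the asymptotics \eqref{sim1} must be established \emph{before} the barrier argument; your proposed order (barriers first, then \eqref{sim1}) should be reversed, or else you must match on a fixed line $x=x_\epsilon<0$ using continuity of $u$ and \eqref{IF2}, exploiting $\alpha<(1+p)/(mp-\beta)$ to dominate the barrier's constant-in-$t$ trace there.
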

\begin{theorem}\label{theorem 2} 
Let $b>0, 0<\beta<mp, \alpha=(1+p) /(mp-\beta)$, and 
\[C_*=\bigg[\frac{b(mp-\beta)^{1+p}}{(m(1+p))^{p}p(m+\beta)}\bigg]^{\frac{1}{mp-\beta}}.\]
Then the interface expands or shrinks accordingly as $C>C_*$ or $C<C_*$ and
\begin{equation}\label{eta3}
\eta(t)\sim\zeta_*t^{\frac{mp-\beta}{(1+p)(1-\beta)}},\, \text{as } t\rightarrow 0^{+},
\end{equation}
where $\zeta_* \lessgtr 0$ if $C \lessgtr C_*$, and for arbitrary $\rho < \zeta_*$ there exists $f_1(\rho)>0$ such that
\begin{equation}\label{sss3}
u(\zeta_\rho (t),t)\sim t^{1/(1-\beta)}f_1(\rho),\, \text{as } t\rightarrow 0^{+},
\end{equation}
where $\zeta_\rho (t) = \rho t^{\frac{mp-\beta}{(1+p)(1-\beta)}}$.
\end{theorem}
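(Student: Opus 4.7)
The plan hinges on the fact that, at the critical exponent $\alpha = (1+p)/(mp-\beta)$, the problem admits an exact self-similar reduction. In the similarity variable $\zeta = xt^{-k}$ with $k = (mp-\beta)/((1+p)(1-\beta))$, the ansatz $u(x,t) = t^{1/(1-\beta)}V(\zeta)$ turns \eqref{OP1} into an ODE for $V$, and the algebraic identity $k\alpha = 1/(1-\beta)$ forces the ``drift plus source'' part of this ODE to cancel on any profile of the form $V(\zeta) = A(-\zeta)_+^\alpha$. Matching the remaining diffusion and reaction contributions pins down $A = C_*$ exactly, producing the time-independent weak solution $u_*(x,t) = C_*(-x)_+^\alpha$ of \eqref{OP1}. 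This solution is the separatrix: its interface is stationary at $x = 0$, and perturbing $C$ away from $C_*$ should tilt the balance between diffusion and absorption.

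The main computational step is to construct a two-parameter family of power-type barriers
\[
W_{A,\zeta_0}(x,t) := A(\zeta_0 t^k - x)_+^\alpha.
\]
Direct substitution, using $k\alpha = 1/(1-\beta)$ together with the identity $p(m\alpha - 1) - 1 = \alpha\beta$, reduces the residual on $\{W_{A,\zeta_0} > 0\}$ to
\[
L W_{A,\zeta_0} = A\alpha k\zeta_0 t^{k-1}(\zeta_0 t^k - x)^{\alpha-1} + bA^\beta\bigl[1 - (A/C_*)^{mp-\beta}\bigr](\zeta_0 t^k - x)^{\alpha\beta}.
\]
Since $mp < 1$ one checks that $\alpha - 1 > \alpha\beta$, so the reaction-type term dominates near the free boundary of $W_{A,\zeta_0}$ while the transport-type term dominates further into its positivity set; the two coefficients can therefore be tuned essentially independently via $A$ and $\zeta_0$. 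In particular, the critical choice $A = C_*$ annihilates the second term and makes $W_{C_*,\zeta_0}$ an exact supersolution when $\zeta_0 > 0$ and an exact subsolution when $\zeta_0 < 0$. These are the workhorse barriers.

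For $C > C_*$, one takes $W_{C_*,\zeta_4}$ with $\zeta_4 > 0$ large as the upper barrier (exact supersolution, with initial ordering holding on a left neighborhood of $0$ by the asymptotic $u_0\sim C(-x)_+^\alpha$ plus a standard extension) and $W_{C_2,\zeta_3}$ with $C_2 \in (C_*, C]$ and $\zeta_3 > 0$ small as the lower barrier, for which the subsolution inequality is valid on the strip adjacent to its free boundary where the negative reaction term dominates the positive transport term. The comparison is then carried out in the irregular parabolic domains bounded by the characteristic curves $x = \zeta_j t^k$, using the comparison machinery developed in \cite{AbdullaPrinkey1, Abdulla4, Abdulla5, Abdulla6, Abdulla7} and packaged as the lemmas of Section~\ref{sec3}. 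The case $0 < C < C_*$ is handled by the mirror construction with $\zeta_0 < 0$: the subsolution $W_{C_*, -\zeta_5}$ (again with the reaction term identically cancelled) and the supersolution $W_{C_3, -\zeta_6}$ with $C_3 \in [C, C_*)$, localized near the interface where now the positive reaction term dominates. Squeezing $\zeta_3, \zeta_4$ (respectively $\zeta_5, \zeta_6$) to a common limit gives \eqref{eta3} with the asserted sign of $\zeta_*$, and evaluating the bracketing barriers at $x = \rho t^k$ with $\rho < \zeta_*$ delivers \eqref{sss3}, the limit $f_1(\rho)$ being the value of the associated self-similar profile at $\zeta = \rho$.

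The principal obstacle is that, as soon as $A \ne C_*$, the two terms in $L W_{A,\zeta_0}$ carry opposite signs and different powers of $(\zeta_0 t^k - x)$, so the sub- or supersolution property is only local in $x$ and cannot be extended to the full positivity set of $W_{A,\zeta_0}$. In the shrinking case $C < C_*$ an additional difficulty arises because the globally valid subsolution $W_{C_*, -\zeta_5}$ has initial values $C_*(-x)_+^\alpha$ that exceed $u_0$ near $x = 0^-$, so the initial comparison has to be transferred to positive times via the nonlinear scaling lemmas of Section~\ref{sec3}. Calibrating the explicit constants recorded in the Appendix so that all these local comparisons dovetail as $t \to 0^+$ is where the delicate bookkeeping lives.
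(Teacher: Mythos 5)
Your barrier computation is sound: the residual identity for $LW_{A,\zeta_0}$, the exponent identity $p(m\alpha-1)-1=\alpha\beta$, and the observation that $A=C_*$ annihilates the reaction imbalance so that $W_{C_*,\zeta_0}$ is an exact supersolution for $\zeta_0>0$ and subsolution for $\zeta_0<0$ all match the paper's constructions (cf.\ \eqref{g1}--\eqref{lg3} and the estimates \eqref{result3}, \eqref{result4}). The genuine gap is your final step: ``squeezing $\zeta_3,\zeta_4$ (respectively $\zeta_5,\zeta_6$) to a common limit.'' There is no such limit process available. The barrier constants are fixed, distinct numbers (see the Appendix: $\zeta_3\neq\zeta_4$), and comparison with power-type barriers can only bracket the interface between two rays $\zeta' t^{k}\leq\eta(t)\leq\zeta'' t^{k}$ with $\zeta'<\zeta''$; as $A\to C_*$ the local sub/supersolution property you exploit away from the critical constant degenerates, so the bracket cannot be closed. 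The same objection hits \eqref{sss3}: evaluating two bracketing barriers with different constants along $x=\rho t^{k}$ yields only $c_1 t^{1/(1-\beta)}\leq u\leq c_2 t^{1/(1-\beta)}$ with $c_1<c_2$, not an asymptotic equality with a definite limit $f_1(\rho)$.

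The missing idea is the one the paper isolates in Lemma \ref{Lemma 3}: at the critical exponent $\alpha=(1+p)/(mp-\beta)$ the initial function \eqref{IF3} is invariant under the rescaling $u_k(x,t)=k\,u(k^{(\beta-mp)/(1+p)}x,\,k^{\beta-1}t)$ of \eqref{scale2}, and since uniqueness for this double degenerate problem is open, one works with the \emph{minimal} solution: minimality gives $u\leq u_k$ for every $k>0$, the change of variables \eqref{changeofvar} gives the reverse inequality, hence $u\equiv u_k$, and taking $k=t^{1/(1-\beta)}$ shows that $u(x,t)=t^{1/(1-\beta)}f_1\bigl(xt^{-\frac{mp-\beta}{(1+p)(1-\beta)}}\bigr)$ holds \emph{exactly}, not merely as an ansatz. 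Your first paragraph writes down this self-similar reduction but uses it only to identify the stationary profile $C_*(-x)_+^{\alpha}$; you never argue that the actual solution is self-similar, and without that step \eqref{eta3} and \eqref{sss3} are out of reach. Once self-similarity is in hand, $\eta(t)=\zeta_* t^{\frac{mp-\beta}{(1+p)(1-\beta)}}$ and $u(\rho t^{\frac{mp-\beta}{(1+p)(1-\beta)}},t)=f_1(\rho)t^{1/(1-\beta)}$ are identities, and the barriers are needed only for what they can actually deliver: finiteness and the sign of $\zeta_*$, positivity $f_1(0)=A_1>0$ when $C>C_*$ (proved in the paper by the separate subsolution $C_1(-x+t)_+^{\alpha}$ with $C_*<C_1<C$), and the boundary datum $u(-\ell_1 t^{\frac{mp-\beta}{(1+p)(1-\beta)}},t)=\lambda t^{1/(1-\beta)}$ of \eqref{e2l3} used to anchor the lower barrier when $C<C_*$. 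For data satisfying only the asymptotic condition \eqref{IF2}, the identities are converted to asymptotics as $t\to0^+$ by the localization of Lemma \ref{Lemma 4}; your closing remark about transferring the initial comparison via scaling gestures at this but cannot substitute for the self-similarity argument itself.
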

\begin{theorem}\label{theorem 3} 
Let $b>0, 0 < \beta < mp$ and $\alpha > (1+p)/(mp-\beta)$. Then the interface initially shrinks and 
\begin{equation}\label{etasim3}
\eta (t) \sim -\ell_* t^\frac{1}{\alpha (1-\beta)},\, \text{as } t\rightarrow 0^{+},
\end{equation}
where $\ell_* = C^{-1/\alpha} (b(1-\beta))^\frac{1}{\alpha (1-\beta)}$. For any $\ell > \ell_*$, we have
\begin{equation}\label{usim3}
\begin{aligned}
u(\eta_{\ell} (t),t) \sim \left[ C^{1-\beta} \ell^{\alpha (1-\beta)} - b (1-\beta) \right]^\frac{1}{1-\beta}t^\frac{1}{1-\beta}, \, \text{as } t\rightarrow 0^{+},
\end{aligned}
\end{equation} 
where $\eta_{\ell} (t) = -\ell t^{\frac{1}{\alpha(1-\beta)}}$.
\end{theorem}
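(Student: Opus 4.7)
The idea is that when $\alpha>(1+p)/(mp-\beta)$ the initial profile is so flat near $x=0$ that the absorption term $bu^\beta$ dominates the double-degenerate diffusion, so $u$ should be close to the solution of the pure absorption ODE $v_t=-bv^\beta$ with $v(\cdot,0)=u_0(\cdot)$. That ODE solves explicitly to $v(x,t)=[C^{1-\beta}(-x)_+^{\alpha(1-\beta)}-b(1-\beta)t]_+^{1/(1-\beta)}$, whose interface sits at $x=-\ell_*t^{1/(\alpha(1-\beta))}$ and whose value at $x=\eta_\ell(t)$ (for $\ell>\ell_*$) equals the right-hand side of \eqref{usim3}. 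A scaling check confirms the heuristic: under the ansatz $u\sim t^{1/(1-\beta)}$, $x\sim t^{1/(\alpha(1-\beta))}$, both $u_t$ and $bu^\beta$ scale like $t^{\beta/(1-\beta)}$, whereas the diffusion operator scales like $t^{(mp\alpha-p-1)/(\alpha(1-\beta))}$, whose exponent is strictly larger precisely under the hypothesis $\alpha(mp-\beta)>1+p$.

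The plan is to sandwich $u$ between one-parameter families of ODE-type barriers. For small $\epsilon>0$, I would set
\[
u_\pm(x,t)=[(C\pm\epsilon)^{1-\beta}(-x)_+^{\alpha(1-\beta)}-b(1-\beta)(1\mp\epsilon)t]_+^{1/(1-\beta)}.
\]
Differentiation in $t$ gives $\partial_t u_\pm=-b(1\mp\epsilon)u_\pm^\beta$, so the verification of $Lu_+\geq 0$ and $Lu_-\leq 0$ reduces to controlling the diffusion operator $\mathcal{D}(w):=(|(w^m)_x|^{p-1}(w^m)_x)_x$ against $b\epsilon u_\pm^\beta$. A direct computation on the interior of the support of $u_\pm$, where $g:=(C\pm\epsilon)^{1-\beta}(-x)^{\alpha(1-\beta)}-b(1-\beta)(1\mp\epsilon)t$ is comparable to $(-x)^{\alpha(1-\beta)}$, shows that $\mathcal{D}(u_\pm)$ is of size $(-x)^{mp\alpha-p-1}$ while $u_\pm^\beta$ is of size $(-x)^{\alpha\beta}$. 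Their ratio is $O((-x)^{\alpha(mp-\beta)-(1+p)})$, which vanishes as $x\to 0^-$ by the hypothesis on $\alpha$. On a sufficiently small rectangle $R_\epsilon:=\{-y_0(\epsilon)<x<0,\ 0<t<T_0(\epsilon)\}$ one thus has $Lu_+\geq 0\geq Lu_-$.

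The remaining step is a standard comparison argument on $R_\epsilon$. The strict inequalities $C-\epsilon<C<C+\epsilon$ together with the asymptotic \eqref{IF2} give $u_-(\cdot,0)\leq u_0\leq u_+(\cdot,0)$ on $[-y_0,0]$, while $u_\pm(0,t)=0$ on the right lateral side. On the left side $x=-y_0$, continuity of $u$ combined with the strict bound $u_-(-y_0,0)<u_0(-y_0)<u_+(-y_0,0)$ lets one shrink $T_0$ so that $u_-(-y_0,t)\leq u(-y_0,t)\leq u_+(-y_0,t)$. The comparison theorems recalled in Section~\ref{sec3} then yield $u_-\leq u\leq u_+$ throughout $R_\epsilon$. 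Since the interfaces of $u_\pm$ are located at $x=-\ell_\pm(\epsilon)t^{1/(\alpha(1-\beta))}$ with $\ell_\pm(\epsilon)\to\ell_*$ as $\epsilon\to 0^+$, sending $\epsilon\to 0^+$ produces \eqref{etasim3}; evaluating $u_\pm$ at $x=-\ell t^{1/(\alpha(1-\beta))}$ for fixed $\ell>\ell_*$ and again letting $\epsilon\to 0^+$ gives \eqref{usim3}.

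The main technical obstacle I anticipate is the behavior of $\mathcal{D}(u_\pm)$ near the free boundary of $u_\pm$, where $g\to 0$ and the factor $g^{pm/(1-\beta)-p-1}$ appearing in $\mathcal{D}(u_\pm)$ is singular whenever $pm/(1-\beta)<p+1$. In that regime the interior estimate of $\mathcal{D}(u_\pm)/u_\pm^\beta$ does not extend uniformly up to the interface, and the sub/super-solution inequality has to be interpreted in the weak sense of Definition~\ref{def: weak soln}. I would handle this either by regularizing the barriers (replacing $g$ by $g+\delta$, verifying the classical sub/super-solution property for the regularized barriers on all of $R_\epsilon$, and passing $\delta\to 0^+$ in the weak formulation after the comparison step), or by invoking the local nonlinear scaling lemmas of Section~\ref{sec3} to reduce the estimate near the interface to a rescaling of the unperturbed problem. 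Either way, the structural fact that $\alpha(mp-\beta)>1+p$ renders the diffusion subcritical with respect to the natural absorption scale is what drives the argument.
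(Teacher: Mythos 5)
Your heuristic and even your scaling check of exponents are exactly right, but the decisive step of your plan---the verification that $Lu_+\geq 0\geq Lu_-$ on the rectangle $R_\epsilon$---is not merely delicate near the barriers' interfaces: in the fast diffusion range it genuinely \emph{fails} there, and the failure cannot be removed by shrinking $R_\epsilon$, since the vanishing curve of $u_\pm$, located at $-x\asymp t^{1/(\alpha(1-\beta))}$, passes through every such rectangle. Writing $g=(C+\epsilon)^{1-\beta}(-x)^{\alpha(1-\beta)}-b(1-\beta)(1-\epsilon)t$ and expanding near $\{g=0\}$ at fixed $t>0$ (so that $g_x$ is bounded away from $0$ and $\infty$), the leading term of the diffusion operator is
\[
\mathcal{D}(u_+)=\frac{p(m-1+\beta)}{1-\beta}\Big(\frac{m}{1-\beta}\Big)^{p}\,|g_x|^{p+1}\,g^{\frac{p(m-1+\beta)-(1-\beta)}{1-\beta}}\,(1+o(1)),\quad g\to 0^+,
\]
and the exponent here is strictly smaller than $\beta/(1-\beta)$ precisely because $mp<1$ (the difference of numerators equals $pm-1-p(1-\beta)<0$); your interior bound with the factor $(-x)^{\alpha(mp-\beta)-(1+p)}$ is valid only where $g$ is comparable to $(-x)^{\alpha(1-\beta)}$ and says nothing in this regime. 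Hence if $m+\beta>1$ (admissible under the hypotheses, e.g.\ $p=1$, $m$ near $1$, $1-m<\beta<m$), $\mathcal{D}(u_+)$ is positive and dominates $b\epsilon u_+^\beta=b\epsilon g^{\beta/(1-\beta)}$, so $Lu_+<0$ in every neighborhood of the interface of $u_+$; if $m+\beta<1$ it is the subsolution $u_-$ that fails, and moreover the flux $|(u_\pm^m)_x|^{p-1}(u_\pm^m)_x\sim -c\,g^{p(m-1+\beta)/(1-\beta)}$ then blows up at $\{g=0\}$, violating the continuity-of-flux hypothesis of Lemma \ref{CT}, so the comparison lemma cannot be invoked across that curve at all. (This is exactly where the fast case parts ways with the slow case $mp>1$ of \cite{AbdullaPrinkey1}, where the corresponding exponents are favorable and your plan is essentially their proof.) Neither of your proposed repairs works: replacing $g$ by $g+\delta$ only translates the degenerate curve to $\{g=-\delta\}$, which re-enters $R_\epsilon$ as soon as $t>\delta/(b(1-\beta)(1-\epsilon))$, so the regularized barrier is still not a supersolution for any fixed $\delta>0$ and there is no valid comparison in which to let $\delta\to 0^+$; and the scaling lemmas give asymptotics of $u$ along curves, not a differential inequality for your barrier.

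There is also a circularity in your comparison step: on the lateral side $x=0$ you use $u(0,t)\leq u_+(0,t)=0$, but $u(0,t)=0$ for small $t$ is part of the assertion that the interface shrinks and cannot be assumed; avoiding it forces the comparison onto a domain containing the unknown support of $u$, which makes the (failing) supersolution property of $u_+$ across its own interface indispensable. The paper circumvents all of this by proving \eqref{usim3} without barriers: Lemma \ref{Lemma 5} (proved as Lemma 7 of \cite{AbdullaPrinkey1}) rescales $u_k(x,t)=ku\big(k^{-1/\alpha}x,\,k^{\beta-1}t\big)$, under which the equation acquires the diffusion coefficient $k^{((1+p)-\alpha(mp-\beta))/\alpha}\to 0$ exactly when $\alpha(mp-\beta)>1+p$, while the absorption is invariant; uniform H\"{o}lder estimates and Arzel\`{a}--Ascoli then give convergence of $u_k$ to the pure-absorption solution $\bar u=[C^{1-\beta}(-x)_+^{\alpha(1-\beta)}-b(1-\beta)t]_+^{1/(1-\beta)}$, and the choice $k=t^{-1/(1-\beta)}$ yields \eqref{usim3}---your heuristic made rigorous by compactness rather than comparison. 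The one-sided interface bound $\eta(t)\geq -\ell t^{1/(\alpha(1-\beta))}$ for each $\ell>\ell_*$ is then immediate from \eqref{usim3}; only the opposite bound requires a supersolution, and there the degeneracy at the barrier's vanishing curve is handled, as in \cite{AbdullaPrinkey1}, by the special comparison theorems in domains with characteristic boundary curves, not by a rectangle comparison. Until you either adopt that scaling-compactness step or produce a barrier whose differential inequality and flux continuity survive up to its interface in the range $0<mp<1$, the proposal has a genuine gap at its central step.
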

\begin{theorem}\label{theorem 4} 
Let $b>0, \ \beta=mp$ and $\alpha>0$. In this case there is an infinite speed of propagation. For arbitary $\epsilon>0$, there exists a number $\delta = \delta(\epsilon) > 0$ such that
\begin{equation}\label{MR41}
t^\frac{1}{1-mp} \phi(x) \leq u(x,t) \leq  (t+\epsilon)^\frac{1}{1-mp}\phi(x), \, x>0, \, 0 \leq t \leq \delta,
\end{equation}
where $\phi=\phi(x)>0$ is a solution of the stationary problem
\begin{equation}
\begin{cases}
(|(\phi^m)'|^{p-1} (\phi^m)')' = \frac{1}{1-mp} \phi + b \phi^{mp}, \,x>0, \label{phiODE1} \\
\phi(0) = 1, \, \phi(+\infty) = 0.
\end{cases}
\end{equation}
Moreover, we have
\begin{equation}\label{u41}
\ln u(x,t) \sim -\frac{1}{m}\bigg(\frac{b}{p}\bigg)^{1/(1+p)}x, \text{ as } x\to+\infty, \, 0 \leq t \leq \delta.
\end{equation}
\end{theorem}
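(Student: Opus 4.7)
The plan is to exploit the fact that the choice $\beta = mp$ in \eqref{OP1} is precisely what makes the PDE admit the one-parameter family of exact separable solutions $V_\mu(x,t) := (t+\mu)^{1/(1-mp)}\phi(x)$, parametrized by $\mu \geq 0$. A direct substitution reduces the PDE for $V_\mu$ to exactly the ODE \eqref{phiODE1} after dividing by $(t+\mu)^{mp/(1-mp)}$; no free multiplicative scaling survives because the two terms $\phi/(1-mp)$ and $b\phi^{mp}$ on the right of \eqref{phiODE1} transform under different powers of $\phi$. The bounds \eqref{MR41} will then follow by comparing $u$ from below with $V_0$ and from above with $V_\epsilon$ in the quarter-plane $\{x > 0,\ 0 < t < \delta\}$, and \eqref{u41} will be a direct logarithmic consequence of the decay of $\phi$.

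The first concrete step is to construct $\phi$ and pin down its behavior at infinity. Multiplying \eqref{phiODE1} by $(\phi^m)'$ and integrating from $x$ to $+\infty$, using $\phi(+\infty) = 0$ and the expected vanishing of $(\phi^m)'$ there, produces the first integral
\[
\frac{p}{p+1}\bigl|(\phi^m)'(x)\bigr|^{p+1} = \frac{m\,\phi(x)^{m+1}}{(m+1)(1-mp)} + \frac{b\,\phi(x)^{m(1+p)}}{1+p}.
\]
This is a first-order autonomous ODE for $\phi$ which, combined with $\phi(0)=1$, is solvable by quadrature and produces a positive, strictly decreasing $C^1$ solution tending to $0$ at $+\infty$. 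Because $1-mp > 0$, for small $\phi$ the $\phi^{m(1+p)}$ term dominates $\phi^{m+1}$, and solving for $(\phi^m)'$ yields $-\phi'(x) \sim \tfrac{1}{m}(b/p)^{1/(1+p)}\phi(x)$ as $x\to+\infty$; integration gives $\ln\phi(x) \sim -\tfrac{1}{m}(b/p)^{1/(1+p)}x$.

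Next I would carry out the two comparisons in the quarter-plane. For the upper bound, at $t=0$ we have $V_\epsilon(x,0) = \epsilon^{1/(1-mp)}\phi(x) > 0 = u(x,0)$ on $(0,\infty)$; at $x=0$, $V_\epsilon(0,t) \geq \epsilon^{1/(1-mp)}$, and since the weak solution is continuous with $u(0,0)=0$, for $\delta = \delta(\epsilon)$ sufficiently small $u(0,t) \leq \epsilon^{1/(1-mp)} \leq V_\epsilon(0,t)$ on $[0,\delta]$; the exponential decay of $\phi$ handles infinity. The comparison principle recalled in Section~\ref{sec3} then yields $u \leq V_\epsilon$. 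The lower bound is the key technical step: both $V_0(x,0)$ and $u(x,0)$ vanish on $(0,\infty)$, but one must establish $V_0(0,t) = t^{1/(1-mp)} \leq u(0,t)$ for small $t$. The idea is to build a localized subsolution rooted in the initial asymptotic $u_0(x) \sim C(-x)^\alpha_+$: by the infinite-speed-of-propagation property of the unabsorbed fast-diffusion equation, $u(0,t)$ grows at least like $c\,t^{\alpha/(1+p-\alpha(mp-1))}$, and since $\alpha(1-mp) < 1+p+\alpha(1-mp)$ one has $\alpha/(1+p-\alpha(mp-1)) < 1/(1-mp)$, so $t^{1/(1-mp)}$ is dominated for $t$ small, and the absorption $bu^{mp}$ enters only as a lower-order perturbation that the barrier can accommodate.

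Finally, \eqref{u41} is an immediate consequence of \eqref{MR41}: dividing $\ln u(x,t)$ by $x$ sandwiches it between $\tfrac{\ln t}{(1-mp)x} + \tfrac{\ln\phi(x)}{x}$ and $\tfrac{\ln(t+\epsilon)}{(1-mp)x} + \tfrac{\ln\phi(x)}{x}$, and both tend to $-\tfrac{1}{m}(b/p)^{1/(1+p)}$ as $x\to+\infty$ by the first step, uniformly on $t \in (0,\delta]$. The principal obstacle is the lower boundary estimate $u(0,t) \geq t^{1/(1-mp)}$: it cannot invoke the theorem itself and must be secured by a barrier construction adapted to the precise near-origin behavior of $u_0$, with the absorption treated perturbatively --- this is the role of the essential lemmas anticipated in Section~\ref{sec3}.
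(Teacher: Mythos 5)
Your proposal is correct and follows essentially the same route as the paper: comparison with the exact separable solutions $t^{1/(1-mp)}\phi(x)$ and $(t+\epsilon)^{1/(1-mp)}\phi(x)$ via Lemma~\ref{CT}, with the lower boundary estimate $u(0,t)\geq t^{1/(1-mp)}$ secured exactly as you anticipate by the Section~\ref{sec3} machinery (Lemma~\ref{Lemma 2}, Case~2, giving $u(0,t)\sim A_0 t^{\alpha/(1+p+\alpha(1-mp))}$ with $\tfrac{\alpha}{1+p+\alpha(1-mp)}<\tfrac{1}{1-mp}$), and $\phi$ constructed by the same first integral and quadrature as in \eqref{ODEsolution1}--\eqref{ODEsolution2}. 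The only cosmetic difference is that you obtain \eqref{phi41} by direct asymptotic integration of the first-order ODE, whereas the paper uses an equivalent rescaling argument ($x\to x/\epsilon$, $\Lambda_\epsilon(x)=-\epsilon\ln\phi(x/\epsilon)$); both yield the same limit and hence \eqref{u41}.
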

\begin{theorem}\label{theorem 5} 
Let either $b>0, \, \beta > mp$ or $b<0, \, \beta \geq 1$ or $b=0$, and
\[D=\bigg[\frac{(m(1+p))^p(m+1)}{(1-mp)^p}\bigg]^\frac{1}{1-mp}.\]
Then there is an infinite speed of propagation of the interface and \eqref{sim1} holds. 
If $b>0, \, \beta \geq \frac{p(1-m)+2}{1+p}$ or $b<0, \, \beta \geq 1$ or $b=0$, then there exists a number $\delta > 0$ such that 
\begin{equation}\label{asymp1}
u(x,t) \sim D t^\frac{1}{1-mp} x^\frac{1+p}{1-mp}, \text{ as } x \to +\infty, \, t \in (0,\delta],
\end{equation}
while if $b>0$ and $1 \leq \beta < \frac{p(1-m)+2}{1+p}$, then
\begin{equation}\label{asymp2}
\lim_{t \to 0^+} \lim_{x \to +\infty} \frac{u(x,t)}{t^\frac{1}{1-mp} x^\frac{1+p}{mp-1}} = D.
\end{equation}
If $b>0$ and $mp<\beta<1$, then there exists a number $\delta > 0$ such that 
\begin{equation}\label{asymp3}
u(x,t) \sim C_* x^\frac{1+p}{mp-\beta}, \text{ as } x \to +\infty, \, t\in(0, \delta].
\end{equation}

\end{theorem}
\section{Further details of the main results}\label{sec2a}
In this section we outline some essential details of the main results described in Theorems \ref{theorem 1}\,-\,\ref{theorem 5} of Section \ref{sec2}. We refer to the Appendix for the explicit values of relevant constants that appear throughout this section.\\
{\it Further details of Theorem \ref{theorem 1}}.
The solution $u$ satisfies the estimation
\begin{equation}
C_1 t^\frac{1}{1-\beta} (\zeta_1 - \zeta)^\frac{1+p}{mp-\beta}_+ \leq u(x,t) \leq C_* t^\frac{1}{1-\beta} (\zeta_2 - \zeta)^\frac{1+p}{mp-\beta}_+, \, 0 < t \leq \delta, \label{result1}
\end{equation}
where $\zeta = xt^\frac{\beta - mp}{(1+p)(1-\beta)}$. 
The left-hand side of \eqref{result1} is valid for $0\leq x <+\infty$, while the right-hand side is valid for $x\geq \ell_0 t^{(mp-\beta)/(1+p)(1-\beta)}$. $C_1, \, \zeta_1, \, \zeta_2$, and $\ell_0$, are positive constants depending on $m, \, p, \, \beta$, and $b$. 
Moreover,
\begin{equation}\label{f1}
f(\rho) = C^\frac{1+p}{1+p - \alpha (mp -1)} f_0 (C^\frac{mp -1}{1+p - \alpha (mp-1)} \rho), \ f_0 (\rho) = w(\rho, 1)\, \ \rho \in \mathbb{R},
\end{equation}
where $w$ is a minimal solution of the CP \eqref{OP1}, \eqref{IF3} with $C=1,\,  b=0$. If $u_0$ is given by \eqref{IF3}, then the right-hand sides of \eqref{result1} and \eqref{result2} are valid for all $t>0$. \\
{\it Further details of Theorem \ref{theorem 2}}.
Assume $u$ solves the CP \eqref{OP1}, \eqref{IF3}. 
If $C=C_*$, then $u_0$ is the stationary solution to the CP. If $C \neq C_*$, then the minimal solution of the CP is given by 
\begin{equation}
u(x,t)=t^\frac{1}{1-\beta} f_1(\zeta),\,  \zeta = xt^\frac{\beta - mp}{(1+p)(1-\beta)}, \label{sss2}
\end{equation}
and 
\begin{equation}
\eta (t) = \zeta_* t^\frac{mp-\beta}{(1+p)(1-\beta)}, \, t \geq 0, \label{eta2}
\end{equation}
If $C>C_*$, the interface initially expands and we have
\begin{gather}
C' (\zeta' t^\frac{mp-\beta}{(1+p)(1-\beta)} -x)^\frac{1+p}{mp-\beta}_+ \leq u \leq C'' (\zeta'' t^\frac{mp-\beta}{(1+p)(1-\beta)} -x)^\frac{1+p}{mp-\beta}_+, \label{result3} \\
\zeta' \leq \zeta_* \leq \zeta'', \, 0 \leq x < +\infty, \, t>0 \label{result4}
\end{gather}
where $C'=C_2, C''=C_*, \zeta'=\zeta_3, \text{ and } \zeta''=\zeta_4$.  
If $0 < C < C_*$, then the interface shrinks and there exists $\ell_1 >0$ such that for all $\ell \leq \ell_1$ there exists a number $\lambda >0$ such that
\begin{equation}\label{e2l3}
u(\ell t^\frac{mp-\beta}{(1+p)(1-\beta)} , t) = \lambda t^\frac{1}{1-\beta}, \, t \geq 0,
\end{equation}
and $u$ and $\zeta_*$ satisfy the estimates \eqref{result3}, \eqref{result4} with $C' = C_*, C'' = C_3, \zeta' = -\zeta_5, \text{ and }  \zeta'' = -\zeta_6$.\\
{\it Further details of Theorem \ref{theorem 3}}.
The interface initially coincides with that of the solution
\[\bar u(x,t)=\big [C^{1-\beta}(-x)_+^{\alpha(1-\beta)}-b(1-\beta)t\big]_+^{1/(1-\beta)}\]
to the problem 
\[\bar u_t+b\bar u^{\beta}=0,~~~~~~~~~~~\bar u(x,0)=C(-x)_+^{\alpha}.\]
{\it Further details of Theorem \ref{theorem 4}}.
The explicit solution of the problem \eqref{phiODE1} is given by
\begin{equation}\label{ODEsolution1}
\phi(x)=F^{-1}(x), \, 0\leq x < +\infty,
\end{equation}
where $F^{-1}(\cdot)$ is the inverse of the function
\begin{equation}\label{ODEsolution2}
F(z) = \displaystyle\int_{z}^{1} ms^{-1} \left[ \frac{b}{p} + \frac{m(1+p)}{p(1-mp)(1+m)} s^{1-mp} \right]^{-\frac{1}{1+p}}ds, \ 0<z\leq 1.
\end{equation}
The function $\phi(x)$ satisfies 
\begin{equation}\label{phi41}
\ln \phi(x) \sim -\frac{1}{m}\bigg(\frac{b}{p}\bigg)^{1/(1+p)}x, \text{ as } x\to+\infty,
\end{equation}
and the global estimation
\begin{equation}\label{phiest1}
0<\phi(x)\leq \exp\left(-\frac{1}{m} \left( \frac{b}{p} \right)^\frac{1}{1+p}x\right), \, x>0,
\end{equation}
and therefore
\begin{equation}\label{philim2}
\frac{\phi(x)}{e^{-\gamma x}} \rightarrow +\infty, \text{ as } x\rightarrow +\infty, \text{ if } \gamma > \frac{1}{m} \left( \frac{b}{p} \right)^\frac{1}{1+p}.
\end{equation}
From \eqref{MR41} and \eqref{philim2}, it follows that
\begin{equation}\label{uest1}
\lim_{t \to 0^+} \lim_{x \to +\infty} u(x,t)\exp\left(-\frac{1}{m} \left( \frac{b}{p} \right)^\frac{1}{1+p}x\right) = 0,
\end{equation}
and respectively
\begin{equation}\label{ulim2}
\frac{u(x,t)}{e^{-\gamma x}} \rightarrow +\infty, \text{ as } x\rightarrow +\infty, \, 0 \leq t \leq \delta(\epsilon), \text{ if } \gamma > \frac{1}{m} \left( \frac{b}{p} \right)^\frac{1}{1+p}.
\end{equation}
{\it Further details of Theorem \ref{theorem 5}}.
If $\beta \geq 1$, then for arbitrary $\epsilon >0$, there exists $\delta = \delta (\epsilon) > 0$ such that
\begin{equation}
C_5 t^\frac{\alpha}{1+p - \alpha(mp-1)} (\xi_1 + \xi)^\frac{1+p}{mp-1} \leq u(x,t) \leq C_6 t^\frac{\alpha}{1+p - \alpha (mp-1)} (\xi_2 + \xi)^\frac{1+p}{mp-1}, \label{result5}
\end{equation}
where $\xi = xt^\frac{-1}{1+p-\alpha(mp-1)}$, for all $x \in [0, \infty)$ and $0 \leq t \leq \delta(\epsilon)$.  $C_5, \, C_6, \, \xi_1$, and $\xi_2$, are positive constants depending on $m, \, p, \, \beta$, $b$, and $\epsilon$.
If $b>0$ and $\beta\geq1$, we have the upper estimation
\begin{equation}
u(x,t) \leq D t^\frac{1}{1-mp} x^\frac{1+p}{mp-1}, \, 0 < x < +\infty, \, 0 < t < +\infty.  \label{diffbound}\\
\end{equation}
If $b<0$ and $\beta \geq 1$, then for small $\epsilon > 0$, there exists $\delta = \delta(\epsilon) >0$ such that
\begin{equation}\label{fasymp1}
u(x,t) \leq D(1-\epsilon)^\frac{1}{mp-1} t^\frac{1}{1-mp} x^\frac{1+p}{mp-1}, \text{ for } \mu t^\frac{1}{1+p+\alpha (1-mp)} < x < +\infty, \, 0 < t \leq \delta, 
\end{equation}
where
\begin{equation}
\mu = \left[ \frac{A_0 + \epsilon}{D(1-\epsilon)^\frac{1}{mp-1}} \right]^\frac{mp-1}{1+p}. \label{mu1}\\
\end{equation}
If $b>0$ and $mp<\beta<1$, then there exists $\delta >0$ such that
\begin{equation}
t^\frac{1}{1-\beta} C_* (1-\epsilon) (\zeta_8 + \zeta)^\frac{1+p}{mp-\beta} \leq u(x,t) \leq C_* x^\frac{1+p}{mp-\beta}, \, 0 < x < +\infty, \, 0 < t \leq \delta, \label{mpbeta1}
\end{equation}
where $\zeta = xt^\frac{\beta - mp}{(1+p)(1-\beta)}$, $\epsilon>0$ is an arbitrary sufficiently small number, and  $\zeta_8$  is a positive constant depending on $m, \, p, \, \beta$, $b$, and $\epsilon$. 

If $b=0$ and $\alpha >0$, then the minimal solution to the CP \eqref{OP1}, \eqref{IF3} has the self-similar form
\begin{equation}\label{sss0}
u(x,t) = t^\frac{\alpha}{1+p + \alpha (1-mp)} f(\xi), \, \xi = xt^\frac{-1}{1+p+\alpha(1-mp)}.
\end{equation}
where $f$ satisfies \eqref{f1}. Moreover the following global estimation is valid:
\begin{gather}\label{ubb0}
D t^\frac{\alpha}{1+p + \alpha (1-mp)} (\xi_3 + \xi)^\frac{1+p}{mp-1} \leq u(x,t) \leq C_7 t^\frac{\alpha}{1+p + \alpha (1-mp)}  (\xi_4 + \xi)^\frac{1+p}{mp-1}, \\ 0 \leq x < +\infty, \, 0 < t < +\infty, \nonumber
\end{gather}
where  $C_7, \, \xi_3$, and $\xi_4$, are positive constants depending on $m$ and $p$. 

The right-hand side of \eqref{ubb0} is not sharp enough as  $x\rightarrow +\infty$ and the required upper estimation is provided by an explicit solution to \eqref{OP1}, as in  \eqref{fasymp1}. From \eqref{ubb0} and \eqref{fasymp1} it follows that, for arbitrary fixed $0<t<+\infty$, the asymptotic result \eqref{asymp1} is valid.
Now assume that $u_0$ satisfies \eqref{IF2} with $\alpha>0$. Then \eqref{sim1} is valid and for an arbitrary sufficiently small $\epsilon>0$ there exists a $\delta=\delta(\epsilon)>0$ such that the estimation \eqref{ubb0} is valid for $0<t\leq \delta$, except that in the left-hand side (respectively in the right-hand side ) of \eqref{ubb0} the constant $A_0$ should be replaced by $A_0-\epsilon$ (respectively $A_0+\epsilon$). Moreover, there exists a number $\delta>0$ (which does not depend on $\epsilon$) such that, for arbitrary $t\in(0,\delta]$, the asymptotic result \eqref{asymp1} is valid. 
\section{Preliminary results}\label{sec3}

The prelude of the mathematical theory of the nonlinear degenerate parabolic begins with the papers
\cite{zeldovich,Barenblatt1}, where instantaneous point source type particular solutions were constructed and analyzed. The property of finite speed of propagation and the existence of compactly supported nonclassical solutions and interfaces became a motivating force of the general theory.
The mathematical theory of nonlinear degenerate parabolic equations began with the paper \cite{Oleinik} on the porous medium equation (\eqref{OP1} with $p=1$). Currently there is a well established general theory of the nonlinear degenerate parabolic equations (see \cite{Vazquez2,Dibe-Sv,Abdulla6,Abdulla7,Abdulla8,Abdulla9,Abdulla11,Abdulla12,Abdulla14,Abdulla16,Abdulla18,Abdulla20,Abdulla21,Abdulla25,Herrero_Vazquez,HerreroVazquez1,Kalashnikov1,Kalashnikov4,Kersner1,Vasquez1,Gala1,shmarev}). Boundary value problems for \eqref{OP1} have been investigated in \cite{Kalashnikov2,Kalashnikov3,Esteban,Tsutsumi,Ishige,Degtyarev, Ivanov1, Ivanov2,Vespri1}.
\begin{definition}[Weak Solution]\label{def: weak soln}
A continuous nonnegative function $u(x,t)$ defined in $\mathbb{R} \times [0,T)$ is a weak solution of (\ref{OP1}), (\ref{IF1}) if for any $T_1\in (0,T)$ and any bounded interval
$(a,b)\in \mathbb{R}$, $(u^m)_x\in L^{p+1}((a,b)\times(0,T_1))$ and
\begin{equation}\label{weaksolution}
\int_0^{T_1}\int_a^b\Big(-u\phi_t+|(u^m)_x|^{p-1}(u^m)_x\phi_x+bu^\beta\phi\Big)dxdt=\int_a^bu\phi\Big |_{t=T_1}^{t=0}dx
\end{equation}
for arbitrary $\phi\in C^1([a,b]\times[0,T_1])$ such that $\phi \big|_{x=a}=\phi \big|_{x=b}=0$.
\end{definition}
If $u_0\in C(\mathbb{R})\cap L^\infty(\mathbb{R})$ and is nonnegative, then the 
existence, uniqueness, and comparison theorems for the weak solution of the CP  \eqref{OP1}), \eqref{IF1} have been proved in \cite{Esteban} for the case $b=0$, and in \cite{Tsutsumi} for $b> 0$. In \cite{Esteban} it is proved that the weak solution of \eqref{OP1}, $b=0$, is locally H\"{o}lder continuous. Local H\"{o}lder continuity of the locally bounded weak solutions of the general second order multidimensional nonlinear degenerate parabolic equations with double degenerate diffusion term is proved in \cite{Ivanov1,Ivanov2}. The following is the standard comparison result, which is widely used throughout the paper.
\begin{lemma}\label{CT}
Let $g$ be a non-negative and continuous function in $\overline{Q}$, where:
\[Q = \{(x,t): \eta_0(t) < x < +\infty, \, 0 < t  < T \leq +\infty \}\]
$g = g(x,t)$ is in $C^{2,1}_{x,t}$ in $Q$ outside a finite number of curves: $x = \eta_j(t)$, which divide $Q$ into a finite number of subdomains: $Q^j$, where $\eta_j \in C[0,T]$; for arbitrary $\delta > 0$ and finite $\Delta \in (\delta, T]$ the function $\eta_j$ is absolutely continuous in $[\delta, \Delta]$. Let $g$ satisfy the inequality:
\[ Lg\equiv g_t-(|(g^{m})_x|^{p-1}(g^{m})_x)_x+bg^{\beta} \geq 0, (\leq 0),\]
at the points of $Q$ where $g \in C^{2,1}_{x,t}$. Also assume that the function: $|(g^{m})_x|^{p-1}(g^{m})_x$ is continuous in $Q$ and $g \in L^{\infty}(Q \cap (t \leq T_1))$ for any finite $T_1 \in (0,T]$. If in addition we have that:
\[g(\eta_0(t),t) \geq  (\leq) \, u(\eta_0(t),t), \,\, g(x,0) \geq (\leq) \, u(x,0), \]
then
\[g \geq (\leq) \, u, \, \, \text{in} \, \, \overline{Q}\]
\end{lemma}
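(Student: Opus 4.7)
The plan is a classical Oleinik--Kalashnikov energy comparison argument adapted to the double degenerate structure. I would work on the truncated cylinder $Q_{T_1,R}=\{(x,t):\eta_0(t)<x<R,\ 0<t<T_1\}$ for finite $T_1<T$ and large $R$, and let $R\to+\infty$ at the end using the $L^\infty$ bound on $u$ and $g$. The first technical step is to verify that $g$ is a genuine weak super-(respectively sub-)solution in this cylinder despite the singular curves $x=\eta_j(t)$. Since the flux $|(g^m)_x|^{p-1}(g^m)_x$ is assumed continuous across each $\eta_j$, and each $\eta_j$ is absolutely continuous on compact subintervals of $(0,T)$, integrating by parts on each subdomain $Q^j$ separately and summing up produces no jump contribution along $\eta_j$. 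Hence $g$ satisfies the integral relation of Definition \ref{def: weak soln} with equality replaced by the appropriate one-sided inequality, against any nonnegative admissible test function $\phi$ vanishing at $x=\eta_0(t)$ and $x=R$.

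I would then subtract this weak super-(sub-)solution inequality from the weak formulation satisfied by $u$ and test against $\phi=\chi(x,t)\,\mathrm{sgn}^+_\varepsilon(u^m-g^m)$, where $\chi\ge 0$ is a smooth cutoff vanishing near $x=\eta_0(t)$ and near $x=R$, and $\mathrm{sgn}^+_\varepsilon$ is a smooth monotone regularization of the positive part of the sign function. This yields
\begin{equation*}
\int_0^{T_1}\!\!\int_{\eta_0(t)}^{R}\!\Bigl[-(u-g)\phi_t + \bigl(|(u^m)_x|^{p-1}(u^m)_x-|(g^m)_x|^{p-1}(g^m)_x\bigr)\phi_x + b(u^\beta-g^\beta)\phi\Bigr]\,dx\,dt\leq 0.
\end{equation*}
The monotonicity inequality $(|\xi|^{p-1}\xi-|\zeta|^{p-1}\zeta)(\xi-\zeta)\ge 0$, together with the fact that $u^m-g^m$ shares a sign with $u-g$, makes the limiting diffusion contribution nonnegative as $\varepsilon\to 0^+$; the absorption difference is controlled by the local Lipschitz bound on $s\mapsto bs^\beta$ on the bounded range of $u,g$ (the restriction in Remark \ref{nonunique} is exactly what supplies this bound when $b<0$). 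Pushing $\chi$ to the indicator of $[\eta_0,R]$ and then letting $R\to+\infty$, while invoking the lateral and initial comparison hypotheses to eliminate the boundary and initial traces, produces the Gronwall-type inequality
\[
\int_{\eta_0(t)}^{+\infty}(u-g)^{+}(x,t)\,dx \;\le\; K\int_0^t\!\int_{\eta_0(s)}^{+\infty}(u-g)^{+}(x,s)\,dx\,ds,\quad 0<t<T_1,
\]
forcing $(u-g)^{+}\equiv 0$ on $\overline{Q}$.

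The principal obstacle is the rigorous handling of the piecewise-smooth structure of $g$: justifying the piecewise integration by parts requires simultaneously exploiting the continuity of the flux $|(g^m)_x|^{p-1}(g^m)_x$ across each $\eta_j$ (to kill the spatial jump term) and the absolute continuity of $\eta_j$ on $[\delta,\Delta]$ (to split and recombine the time integrals via the standard change of variables). Once this piecewise-weak-inequality is in place, the remaining limit $\varepsilon\to 0^+$ in the diffusion term is essentially routine, relying on the $L^{p+1}_{\mathrm{loc}}$-regularity of $(u^m)_x$ and $(g^m)_x$ inherent in the weak-solution framework together with dominated convergence.
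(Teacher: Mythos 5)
The paper does not actually prove Lemma~\ref{CT}: it is stated as ``the standard comparison result,'' with the underlying machinery (comparison theorems in irregular, non-cylindrical domains with characteristic boundary curves) developed in the author's earlier works cited in the introduction. Your sketch follows precisely the classical Oleinik--Kalashnikov $L^1$-comparison route that underlies that literature, and your use of the hypotheses on the internal curves is the right one: integrating by parts on each $Q^j$ separately, the continuity of the flux $|(g^m)_x|^{p-1}(g^m)_x$ across each $\eta_j$ kills the spatial jump terms, and the absolute continuity of $\eta_j$ on $[\delta,\Delta]$ legitimizes the splitting of the time integrals, so $g$ is a genuine weak super-/subsolution. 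That part is sound.

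However, three of your limit passages have genuine gaps as written. First, the absorption term: for $b>0$ and $0<\beta<1$ (a case the lemma must cover, since the paper only excludes $b<0$, $\beta<1$) the map $s\mapsto bs^\beta$ is \emph{not} locally Lipschitz at $s=0$, so your Gronwall constant $K$ does not exist; the correct mechanism for $b\geq 0$ is that $b(u^\beta-g^\beta)$ has the same sign as $u-g$ on the support of $\mathrm{sgn}^+_\varepsilon(u^m-g^m)$ and is simply dropped, Lipschitz control being needed (and available) only when $b<0$, $\beta\geq 1$. Second, ``pushing $\chi$ to the indicator of $[\eta_0,R]$'' cannot be justified: near the lateral boundary $|\chi_x|\sim\sigma^{-1}$ on a strip of width $\sigma$, while the flux difference lies only in $L^{(p+1)/p}_{\mathrm{loc}}$, so H\"older gives a contribution of order $\sigma^{-p/(p+1)}\cdot\sigma^{\,p/(p+1)}$ at best bounded, not vanishing. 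The argument that actually works exploits the sign structure before removing any cutoff: since $u,g$ are continuous on $\overline{Q}$ and $g\geq u$ on $x=\eta_0(t)$, for fixed $\varepsilon$ the closed set $\{u^m-g^m\geq\varepsilon/2\}$ lies at positive distance from the lateral boundary on compact time intervals, so $\mathrm{sgn}^+_\varepsilon(u^m-g^m)$ already vanishes identically near $x=\eta_0(t)$ and no lateral flux term ever arises. Third, the limit $R\to+\infty$ does not follow from the $L^\infty$ bound alone: with a cutoff spread over $[R,2R]$, a Caccioppoli estimate bounds the outer flux contribution uniformly but does not make it vanish; one needs an exponentially weighted Gronwall inequality in $x$ (as in the Kalashnikov and Herrero--Pierre uniqueness arguments for unbounded domains). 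Finally, a smaller but real point: since $u_t$ is not a function, the term $-(u-g)\phi_t$ with $\phi$ built from $u,g$ must be handled by Steklov averaging rather than the formal chain rule. All four repairs are standard, but the boundary-cutoff and $R\to\infty$ steps fail in the form you gave them.
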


Suppose that $u_0\in C(\mathbb{R})$, and may have unbounded growth as $|x|\rightarrow +\infty$. It is well known that in this case some restriction must be imposed on the growth rate for the existence and uniqueness of the solution to the CP \eqref{OP1}, \eqref{IF1}. For the particular cases of the equation \eqref{OP1} with $b=0$, this question was settled in \cite{BCP,HerreroPierre} for the porous medium equation ($p=1$) with slow ($m>1$) and fast ($0<m<1$) diffusion; and in \cite{DibeHerrero1,DibeHerrero2} for the $p$-Laplacian equation ($m=1$) with slow ($p>1$) and fast ($0<p<1$) diffusion; The case of reaction-diffusion equation $m>1,p=1,b > 0$ is analyzed in \cite{Kalashnikov4,KPV,Abdullaev1}. Surprisingly, only a partial result is available for the double-degenerate PDE \eqref{OP1}. 
It follows from \cite{Ishige} that there exists a weak solution to the CP \eqref{OP1}, \eqref{IF3} for any $\alpha >0$. 
Uniqueness of the solution is an open problem. For our purposes it is satisfactory to employ the notion of the minimal solution.
\begin{definition}[Minimal Solution]\label{def: minimal soln} A nonnegative weak solution $u$ of the CP  \eqref{OP1}, \eqref{IF1} is called a {\it minimal solution} if
\begin{equation}\label{minimalsolution}
0\leq u(x,t)\leq v(x,t),
\end{equation}
for any nonnegative weak solution $v$ of the same problem  \eqref{OP1}, \eqref{IF1}.
\end{definition}
Note that the minimal solution is unique by definition. The following standard comparison result is true in the class of minimal solutions.
\begin{lemma}\label{CT2} Let $u$ and $v$ be minimal solutions of the CP  \eqref{OP1}, \eqref{IF1}. If
\begin{equation*}
u(x,0)\geq (\leq) \ v(x,0), \ x\in \mathbb{R},
\end{equation*}
then
\begin{equation*}
u(x,t)\geq (\leq) \ v(x,t), \ (x,t)\in \mathbb{R}\times (0,T).
\end{equation*}
\end{lemma}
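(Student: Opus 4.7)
The plan is to reduce the comparison of minimal solutions $u$, $v$ (with possibly unbounded initial data) to the known comparison for bounded solutions, via monotone approximation of $v_0$ from below. I treat only the case $u_0 \geq v_0$; the reverse is symmetric.

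First, I would approximate $v_0$ by an increasing sequence of bounded, continuous, compactly supported nonnegative functions $v_0^n \uparrow v_0$, e.g., $v_0^n(x) = \min(v_0(x), n)\, \chi_n(x)$, where $\chi_n$ is a smooth cutoff supported in $[-n-1, n+1]$ and equal to $1$ on $[-n, n]$. Then $v_0^n \leq v_0 \leq u_0$. By the existence and uniqueness theory for bounded initial data from \cite{Esteban, Tsutsumi}, each $v_0^n$ admits a unique bounded weak solution $v_n$ of \eqref{OP1}, \eqref{IF1}, and pairwise comparison of bounded solutions (available in this regime from the same references) shows $v_n \leq v_{n+1}$.

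Next, the key step is to establish $v_n \leq u$ on $\mathbb{R} \times (0, T)$. Since $v_n$ is bounded with $v_n(x,0) = v_0^n(x) \leq u_0(x) = u(x,0)$, and $u$ is a nonnegative weak solution, I would apply a localized form of Lemma~\ref{CT} on expanding rectangles $[-R, R] \times [0, T_1]$: at the lateral boundaries $x = \pm R$, one constructs explicit barriers that dominate the bounded $v_n$ while requiring nothing of $u$ beyond nonnegativity, and then lets $R \to \infty$.

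Setting $\tilde v := \lim_{n \to \infty} v_n$, the interior H\"older estimates of \cite{Ivanov1, Ivanov2} together with monotone convergence allow passing to the limit in the weak formulation \eqref{weaksolution}, so $\tilde v$ is a weak solution with initial data $v_0$. By the minimality of $v$ we then have $v \leq \tilde v$, and combining with $v_n \leq u$ (passing $n \to \infty$) gives $v \leq \tilde v \leq u$, which concludes the argument. The hardest part will be the comparison $v_n \leq u$ in the second step: standard CP comparison principles typically require both competing functions to satisfy a growth bound at infinity, whereas $u$ here is only assumed a nonnegative weak solution and a priori may be unbounded. The workaround relies on the fact that $v_n$ has bounded, compactly supported data, so suitable explicit lateral barriers exist on $\{x = \pm R\}$ that dominate $v_n$ there, enabling the use of Lemma~\ref{CT} on the truncated domain before sending $R \to \infty$.
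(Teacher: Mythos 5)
Your overall architecture --- monotone approximation of the data from below, passage to a limit $\tilde v$, and invoking minimality of $v$ to get $v\le\tilde v\le u$ --- is the natural route, and the paper itself offers no proof of Lemma~\ref{CT2} (it is asserted as a standard fact for minimal solutions). But your key step, $v_n\le u$, has a genuine gap precisely where you locate the difficulty. In the fast diffusion regime $0<mp<1$ the equation has \emph{infinite} speed of propagation, so even though $v_0^n$ is compactly supported, $v_n(\cdot,t)>0$ everywhere for $t>0$. To run Lemma~\ref{CT} (or any comparison principle) on the truncated cylinder $[-R,R]\times[0,T_1]$ you need the ordering $v_n\le u$ \emph{on the lateral boundary} $x=\pm R$, and this is exactly what is unavailable: the only information about $u$ there is $u\ge 0$. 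A barrier ``dominating $v_n$'' on $\{x=\pm R\}$ cannot repair this: a supersolution $W$ with $W\ge v_n$ on the parabolic boundary yields only $v_n\le W$, not $v_n\le u$; and you cannot pass to $u+W$, because the operator $L$ is nonlinear and degenerate, so the sum of a solution and a supersolution is not a supersolution --- there is no superposition to exploit. As written, step two therefore does not close, and it is not a technicality: comparison of a bounded solution against an arbitrary nonnegative weak solution of the CP is of the same nature as the uniqueness question that the paper explicitly states is open.

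The standard fix keeps your skeleton but changes the approximating objects: instead of Cauchy problems with compactly supported data, solve Dirichlet problems on bounded cylinders $Q_n=(-n,n)\times(0,T)$ with truncated initial data $v_0^n\le v_0$ and \emph{zero lateral boundary data}. Then on $\{x=\pm n\}$ you have $v_n=0\le u$ by nonnegativity alone, the initial ordering $v_0^n\le u_0$ holds by construction, and bounded-domain comparison (in the spirit of Lemma~\ref{CT}) gives $v_n\le u$ in $Q_n$ with no barriers needed; the same comparison shows $v_n$ is nondecreasing in $n$. From there your limiting argument goes through: uniform local bounds come from $v_n\le u$ and continuity of $u$, equicontinuity from the H\"{o}lder estimates of \cite{Ivanov1,Ivanov2}, the limit $\tilde v$ is a weak solution with data $v_0$, and minimality of $v$ gives $v\le\tilde v\le u$, with the reverse inequality by symmetry using minimality of $u$. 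Two smaller points to flag: for $b<0$, $\beta\ge 1$ the references \cite{Esteban,Tsutsumi} you cite cover $b=0$ and $b>0$ only, so existence and comparison for the approximating problems in that range must be justified separately (routine for the Dirichlet problems, since the source term is locally Lipschitz there); and your monotone-convergence passage in \eqref{weaksolution} needs the uniform local bound noted above before the $L^{p+1}$ gradient estimates can be localized.
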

We now establish a series of lemmas that describe preliminary estimations for the CP. The proof of these results is based on nonlinear scaling.  
\begin{lemma}\label{Lemma 1}
If $b=0$ and $\alpha > 0$, then the minimal solution $u$ of the CP \eqref{OP1}, \eqref{IF3} has the self-similar form \eqref{sss0},
where the self-similarity function $f$ satisfies \eqref{f1}.
If $u_0$ satisfies \eqref{IF2}, and $u$ is the unique weak solution to CP \eqref{OP1}, \eqref{IF1}, then $u$ satisfies \eqref{sim1}.
\end{lemma}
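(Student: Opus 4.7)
The proof strategy rests on two facts: (i) the pure-diffusion operator in \eqref{OP1} with $b=0$ admits a two-parameter scaling symmetry, and (ii) the Cauchy datum \eqref{IF3} is itself a power, hence homogeneous. Their compatibility, combined with uniqueness of the minimal solution (Lemma \ref{CT2}), forces the self-similar representation.

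First I would verify by direct substitution that if $u$ solves $u_t = (|(u^m)_x|^{p-1}(u^m)_x)_x$, then so does $T_{A,B}u(x,t) := A\,u(Bx,\,A^{mp-1}B^{p+1}t)$ for every $A,B>0$. Restricting to the one-parameter subgroup that preserves the initial profile $C(-x)_+^\alpha$ amounts to imposing $AB^\alpha = 1$, i.e.\ $A = B^{-\alpha}$; the rescaled function is then a nonnegative weak solution of the same Cauchy problem. By Lemma \ref{CT2}, the minimal solution $u$ coincides with $T_{B^{-\alpha},B}u$ for every $B>0$, which reads
\[
u(x,t) = B^{-\alpha}\,u\bigl(Bx,\,B^{1+p-\alpha(mp-1)}t\bigr).
\]
Specializing $B = t^{-1/(1+p-\alpha(mp-1))}$ converts the invariance into \eqref{sss0} with $f(\xi) := u(\xi,1)$. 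To exhibit the dependence on $C$, I would repeat the argument but compare the minimal solution $u$ (coefficient $C$) with the minimal solution $w$ (coefficient $1$): seeking $u(x,t) = A\,w(Bx,t)$, the PDE-scaling relation $A^{mp-1}B^{p+1} = 1$ together with the matching condition $AB^\alpha = C$ determine $A$ and $B$ as explicit powers of $C$, and evaluating at $t=1$ yields \eqref{f1} after setting $f_0(\rho) = w(\rho,1)$.

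For the second assertion, assume only \eqref{IF2}. Given an arbitrary $\epsilon \in (0,C)$, pick $\delta_0 > 0$ with $(C-\epsilon)(-x)_+^\alpha \leq u_0(x) \leq (C+\epsilon)(-x)_+^\alpha$ on $(-\delta_0,0)$. Let $u^\pm$ denote the minimal solutions of the Cauchy problem \eqref{OP1}, \eqref{IF3} with coefficients $C\pm\epsilon$, which by the first part are self-similar with shape functions $f^\pm$. Using Lemma \ref{CT2} (and, where needed, Lemma \ref{CT} on subdomains whose lateral boundary is a characteristic curve of the form $x = -\delta_0$) I would sandwich $u$ between $u^-$ and $u^+$ on the short-time parabolic neighborhood of the origin. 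Evaluating along the curve $\xi_\rho(t) = \rho\,t^{1/(1+p-\alpha(mp-1))}$, the self-similar form gives
\[
t^{\alpha/(1+p-\alpha(mp-1))}\,f^-(\rho) \leq u(\xi_\rho(t),t) \leq t^{\alpha/(1+p-\alpha(mp-1))}\,f^+(\rho) \qquad\text{for small } t,
\]
and passing $\epsilon \to 0^+$, using continuity of the shape function in the initial coefficient (again via \eqref{f1}), yields \eqref{sim1}.

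The main obstacle is the last step: the fast-diffusion regime $0<mp<1$ has infinite speed of propagation, so the behavior of $u_0$ outside $(-\delta_0,0)$ instantaneously influences $u(x,t)$ at every $x$, preventing a naive localization. The technical remedy is to observe that along the short-time scale $x \sim t^{1/(1+p-\alpha(mp-1))}$ the self-similar solutions $u^\pm$ are of order $t^{\alpha/(1+p-\alpha(mp-1))}$, whereas contributions from the far field of $u_0$ can be shown to be of strictly higher order in $t$ by comparing with appropriate explicit supersolutions built from the Barenblatt-type scaling (or, when $u_0$ is bounded, with stationary upper barriers). This separation of scales legitimizes replacing $u_0$ by its pure-power germ at the origin, which is precisely what Step 4 requires.
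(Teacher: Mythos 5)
Your treatment of the first assertion is essentially the paper's own argument (the paper defers to Lemma 3 of \cite{AbdullaPrinkey1}, whose mechanism is reproduced here in the proof of Lemma \ref{Lemma 3}): verify the scaling invariance, use minimality to obtain $u\le T_{B^{-\alpha},B}u$, and recover the reverse inequality by the change of variables as in \eqref{U12}--\eqref{changeofvar}, then specialize $B$ to get \eqref{sss0} and compare with the $C=1$ solution to get \eqref{f1}. One correction there: the identity $u=T_{B^{-\alpha},B}u$ does not follow from Lemma \ref{CT2} (which compares two \emph{minimal} solutions with data ordered on all of $\mathbb{R}$); it follows from the definition of minimality applied twice, before and after the substitution $y=Bx$, $\tau=B^{1+p+\alpha(1-mp)}t$.

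For the second assertion there is a genuine gap at the sandwich step, and it sits exactly where the paper does its real work. Lemma \ref{CT2} is unavailable: $u_0$ and $(C\pm\epsilon)(-x)_+^{\alpha}$ are ordered only near the origin (far to the left either order may fail), the hypothesis of Lemma \ref{CT2} requires ordering on all of $\mathbb{R}$, and $u$ need not be minimal. Lemma \ref{CT} on the half-strip $\{x>-\delta_0\}$ is the right tool, but it demands that the comparison function be piecewise $C^{2,1}_{x,t}$ with continuous flux $|(g^m)_x|^{p-1}(g^m)_x$; you take $g=u^{\pm}$, which are only known to be weak solutions, and this regularity is never established (it is not free for the double degenerate operator, which also degenerates where $(u^m)_x=0$). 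Your closing paragraph concedes the ``main obstacle'' --- infinite speed of propagation defeating naive localization --- but resolves it only with the unproven claim that far-field contributions are ``of strictly higher order in $t$.'' The paper fills precisely this hole differently (see the Case 3 proof of Lemma \ref{Lemma 2}, which is the template for the citation covering Lemma \ref{Lemma 1}): it compares $u$ not with self-similar profiles but with solutions $u_{\pm\epsilon}$ of boundary value problems on $|x|<|x_\epsilon|$, arranging the lateral ordering \eqref{case3proof-5} by continuity; it then rescales as in \eqref{case3proof-7}, dominates the rescaled family by the explicit barrier $(C+1)(1+|x|^{\mu})^{\alpha/\mu}(1-\nu t)^{\gamma}$, and passes to the limit via the uniform H\"older estimates of \cite{Ivanov1,Ivanov2}, Arzel\`a--Ascoli and diagonalization (\eqref{case3proof-15}--\eqref{case3proof-16}), identifying the limit with the self-similar solution with coefficient $C\pm\epsilon$. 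To complete your proposal you would need either this compactness argument or a proof that the profiles $f^{\pm}$ are classical where positive so that Lemma \ref{CT} applies to $g=u^{\pm}$ directly; neither is supplied. The final step, letting $\epsilon\to 0^+$ using the continuity in $C$ visible in \eqref{f1}, is fine once the sandwich is secured.
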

The proof coincides with that given for Lemma 3 from \cite{AbdullaPrinkey1}. 
\begin{lemma}\label{Lemma 2}
Let $u$ be a weak solution to the CP \eqref{OP1}, \eqref{IF1}, with $u_0$ satisfying the condition \eqref{IF2}. Let one of the following cases be valid \\
\[
\begin{cases}
b>0, \, 0<\beta<mp,\, 0<\alpha<(1+p)/(mp-\beta) & \text{Case 1}, \\ 
b>0, \beta \geq mp, \, \alpha>0 & \text{Case 2}, \\
b<0,  \beta \geq 1, \, \alpha>0 & \text{Case 3}.
\end{cases}
\]
 Then, for any $\rho\in\mathbb{R}$,  $u$ satisfies \eqref{sim1} with the same function $f$ as in Lemma \ref{Lemma 1}.
\end{lemma}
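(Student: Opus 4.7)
My plan is to reduce \eqref{sim1} to the $b=0$ case handled by Lemma~\ref{Lemma 1} via a nonlinear zoom-in that suppresses the absorption term in the limit. Set $\gamma:=1+p-\alpha(mp-1)$ and, for $\lambda\in(0,1]$, define
\[
u_\lambda(x,t):=\lambda^{-\alpha}\,u(\lambda x,\,\lambda^{\gamma}t).
\]
Since $(x,t,u)\mapsto(\lambda x,\lambda^{\gamma}t,\lambda^{\alpha}u)$ leaves the principal part of \eqref{OP1} invariant, a routine chain-rule calculation shows that $u_\lambda$ satisfies
\[
(u_\lambda)_t-\bigl(|(u_\lambda^{m})_x|^{p-1}(u_\lambda^{m})_x\bigr)_x+b\lambda^{\sigma}u_\lambda^{\beta}=0,\qquad \sigma:=1+p-\alpha(mp-\beta),
\]
with initial trace $u_\lambda(x,0)=\lambda^{-\alpha}u_0(\lambda x)$, which by \eqref{IF2} tends to $C(-x)_+^{\alpha}$ locally uniformly as $\lambda\downarrow 0$.

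The key observation is that $\sigma>0$ in each of the three cases: in Case~1 this is exactly the hypothesis $\alpha<(1+p)/(mp-\beta)$; in Case~2, $\sigma=1+p+\alpha(\beta-mp)\geq 1+p$; and in Case~3, $0<mp<1\leq\beta$ together with $\alpha>0$ yields $\sigma>1+p$. Consequently $b\lambda^{\sigma}\to 0$, and the rescaled Cauchy problem formally converges to \eqref{OP1}, \eqref{IF3} with $b=0$, whose minimal solution $w$ is self-similar by Lemma~\ref{Lemma 1} and satisfies $w(\rho,1)=f(\rho)$. Unwinding the rescaling via $t=\lambda^{\gamma}$ and $x=\rho\lambda$, the asserted relation \eqref{sim1} is exactly $u_\lambda(\rho,1)\to f(\rho)$, so it suffices to justify this convergence.

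To make the limit rigorous I would sandwich $u_\lambda$ between sub- and supersolutions of the limiting equation and appeal to Lemma~\ref{CT2}. For the upper bound in Cases~1 and~2 (where $b>0$), the nonnegative absorption makes $u$ itself a subsolution of the $b=0$ equation, hence $u\leq w_0$ with $w_0$ the minimal $b=0$ solution having the same initial data; the scale-invariance of the $b=0$ problem propagates this to $u_\lambda\leq w_{0,\lambda}$, and the second part of Lemma~\ref{Lemma 1} gives $w_{0,\lambda}(\rho,1)\to f(\rho)$. For the complementary lower bound (and for both sides in Case~3) the natural candidates are small perturbations $g(x,t):=\varphi_{\pm}(t)w(x,t)$ of the self-similar profile, with $\varphi_{\pm}(0)=1$; since $\sigma>0$, the residual left after inserting such $g$ into the $u_\lambda$-operator is of order $b\lambda^{\sigma}$ and can be absorbed by requiring $\varphi_{\pm}$ to satisfy an ODE whose solution tends to $1$ uniformly on $[0,\delta]$ as $\lambda\downarrow 0$.

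The main obstacle is verifying that these barriers are admissible comparison functions uniformly in $\lambda$. Substituting $g=\varphi w$ into the $u_\lambda$-operator yields the residual $\varphi' w+(\varphi-\varphi^{mp})w_t+b\lambda^{\sigma}\varphi^{\beta}w^{\beta}$, whose sign must be controlled using the known self-similar structure of $w$ (in particular, the sign of $w_t$ along rays $x=\rho t^{1/\gamma}$) and the positivity of $\sigma$; the sign of $b$ and the range of $\beta$ then dictate whether the ODE for $\varphi_{\pm}$ is of Bernoulli type (Cases~1--2) or linear (Case~3). Once the two-sided sandwich $\varphi_{-}(t)w\leq u_\lambda\leq\varphi_{+}(t)w$ with $\varphi_{\pm}(t)\to 1$ is in place, evaluating at $(\rho,1)$ and undoing the scaling delivers \eqref{sim1}.
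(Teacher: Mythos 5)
Your scaling step is exactly the paper's: with $\lambda=k^{-1/\alpha}$ your $u_\lambda$ is the rescaling \eqref{case3proof-7}, your rescaled equation is \eqref{case3proof-8}, and your check that $\sigma=1+p-\alpha(mp-\beta)>0$ in all three cases is precisely the observation that makes the absorption coefficient $bk^{(\alpha(mp-\beta)-(1+p))/\alpha}$ vanish as $k\to+\infty$. The genuine gap is in how you pass to the limit. Your proposed two-sided sandwich $\varphi_-(t)\,w\le u_\lambda\le \varphi_+(t)\,w$ with multiplicative barriers cannot work as stated, for two concrete reasons. First, in Case 1 ($b>0$, $0<\beta<mp$) the absorption is strong enough that $u$ has a finite interface (this is the content of Theorem \ref{theorem 1}, which is proved \emph{using} this lemma), whereas the $b=0$ fast-diffusion profile $w$ is strictly positive for all $x$; hence no lower barrier of the form $\varphi_-(t)w$ with $\varphi_->0$ can lie below $u_\lambda$ globally in $x$. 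Second, your ODE mechanism for $\varphi_\pm$ must absorb the residual $b\lambda^{\sigma}\varphi^{\beta}w^{\beta}$ into $\varphi' w$, which requires a bound on $w^{\beta-1}$ uniform in $x$; but $w^{\beta-1}$ is unbounded --- as $x\to+\infty$ when $\beta<1$ (possible in Cases 1 and 2, since $w$ decays like a power), and as $x\to-\infty$ when $\beta>1$ (Cases 2 and 3, since $w\sim C(-x)^{\alpha}$ there). So no choice of $\varphi_\pm(t)$ alone makes these barriers admissible on the whole line, uniformly in $\lambda$; only the easy half of your argument (the upper bound $u\le w_0$ when $b>0$, from nonnegative absorption and Lemma \ref{Lemma 1}) survives.

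The paper avoids both obstructions by not using global barriers to capture the limit. In Case 3 it first \emph{localizes}: comparison on the bounded strip $|x|\le|x_\epsilon|$ traps $u$ between solutions $u_{\pm\epsilon}$ of boundary value problems \eqref{case3proof-2}--\eqref{case3proof-5} with exact power data $(C\pm\epsilon)(-x)_+^{\alpha}$; after rescaling, the domain expands and only a \emph{one-sided} uniform bound is needed, supplied by the explicit supersolution $g=(C+1)(1+|x|^{\mu})^{\alpha/\mu}(1-\nu t)^{\gamma}$, which is not of your multiplicative form and whose residual estimate \eqref{case3proof-11} is insensitive to the sign of $w_t$ and to the growth of $w^{\beta-1}$. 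Uniform boundedness plus the uniform H\"older estimates of \cite{Ivanov1,Ivanov2} then give compactness (Arzel\`a--Ascoli with diagonalization, \eqref{case3proof-16}), the limit is identified as a solution of the $b=0$ CP with data $(C\pm\epsilon)(-x)_+^{\alpha}$, and Lemma \ref{Lemma 1} together with $\epsilon\to0$ and the explicit $C$-dependence in \eqref{f1} yields \eqref{sim1}. To rescue your barrier idea you would have to pair it with exactly this kind of localization to domains where $w^{\beta-1}$ is bounded and the lateral boundary values are controlled --- at which point you have essentially rebuilt the paper's compactness argument, which is what actually handles the sign-indefinite $w_t$ and unbounded $w^{\beta-1}$ obstructions you flagged but did not resolve.
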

The proof of Cases 1 and 2 coincides with the proof of Lemma 4 from \cite{AbdullaPrinkey1}. Consider the Case 3. From \eqref{IF2} it follows that for $\forall \epsilon>0$ $\exists x_{\epsilon}<0$ such that
\begin{equation}\label{case3proof-1}
(C-\epsilon/2)(-x)_+^\alpha \leq u_0(x) \leq (C-\epsilon/2)(-x)_+^\alpha, \ x\geq x_\epsilon.
\end{equation}
Assume that $u_{\pm \epsilon}$ is a solution of the boundary value problem
\begin{equation}\label{case3proof-2}
Lu_{\pm \epsilon} = 0,  \, |x|<|x_\epsilon|,  0<t\leq \delta,
\end{equation}
\begin{equation}\label{case3proof-3}
u_{\pm \epsilon}(x,0)=(C\pm \epsilon)(-x)_+^\alpha, \  |x|\leq|x_\epsilon|,
\end{equation}
\begin{equation}\label{case3proof-4}
u_{\pm \epsilon}(x_\epsilon,t)=(C\pm \epsilon)(-x_\epsilon)^\alpha, \ u_{\pm \epsilon}(-x_\epsilon,t)=u(-x_\epsilon,t), \ 0\leq t \leq \delta,
\end{equation}
where $\delta=\delta(\epsilon)>0$ is chosen such that
\begin{equation}\label{case3proof-5}
u_\epsilon(x_\epsilon,t)\geq u(x_\epsilon,t), \ u_{-\epsilon}(x_\epsilon,t)\leq u(x_\epsilon,t), \ 0\leq t \leq \delta.
\end{equation}
From the comparison theorem it follows that
\begin{equation}\label{case3proof-6}
u_{-\epsilon}\leq u \leq u_{\epsilon}, \ |x|\leq |x_\epsilon|, 0\leq t \leq \delta.
\end{equation}
Now if we rescale
\begin{equation}\label{case3proof-7}
u^{\pm \epsilon}_{k}(x,t)=k u_{\pm \epsilon}(k^{-1/\alpha}x, k^{(\alpha(mp-1)-(1+p))/\alpha}t), \ k>0,
\end{equation}
then $u^{\pm \epsilon}_{k}$ satisfies the following problem
\begin{equation}\label{case3proof-8}
L_{k}u^{\pm \epsilon}_{k} \equiv (u^{\pm \epsilon}_{k})_t-\Big(|((u^{\pm \epsilon}_{k})^{m})_x|^{p-1}((u^{\pm \epsilon}_{k})^{m})_x\Big)_x+bk^{(\alpha(mp-\beta)-(1+p))/\alpha}(u^{\pm \epsilon}_{k})^{\beta} = 0 \,  \text{ in } D^k_\epsilon,
\end{equation}
\begin{equation}\label{case3proof-9}
u^{\pm \epsilon}_k(x,0)=(C\pm \epsilon)(-x)_+^\alpha, \  |x|\leq k^{1/\alpha}|x_\epsilon|, 
\end{equation}
\begin{equation}\label{case3proof-10}
u^{\pm \epsilon}_k(k^{1/\alpha}x_\epsilon,t)=k(C\pm \epsilon)(-x_\epsilon)^\alpha, \ u^{\pm \epsilon}_{k}(-k^{1/\alpha} x_\epsilon, t) = ku(-x_\epsilon, k^{\frac{\alpha (mp-1) -1-p}{\alpha}} t), 0 \leq t \leq k^{\frac{p+1 + \alpha (1-mp)}{\alpha}} \delta,
\end{equation}
where
\[ D^k_\epsilon=\{(x,t): |x|<k^{1/\alpha}|x_\epsilon|,  0<t\leq k^{(\alpha(1-mp)+(1+p))/\alpha}\delta\}. \]
The next step is to prove the convergence of the sequence $\{u^{\pm \epsilon}_{k} \}$ as $k\to+\infty$. Consider a function
\[ g(x,t)=(C+1)(1+|x|^\mu)^{\frac{\alpha}{\mu}}(1-\nu t)^\gamma, \ x\in \mathbb{R}, 0\leq t \leq t_0=\nu^{-1}/2, \]
where
\begin{gather} 
\gamma <0, \ \mu>\frac{p+1}{p}, \ \nu=-h_*+1, \ h_*=\min_{\mathbb{R}} h(x) > -\infty\nonumber\\
h(x)=p(\alpha m)^p(C+1)^{p-1}\gamma^{-1} (1-\nu t)^{\gamma(mp-1)+1} (1+|x|^\mu)^{\frac{\alpha(mp-1)-\mu(p+1)}{\mu}} |x|^{(\mu-1)p-1}\nonumber\\
\times[(\mu-1)(1+|x|^\mu)+(\alpha m-\mu)\mu |x|^\mu] \nonumber
\end{gather}
Then we have
\begin{gather}
L_{k}g = -\gamma(C+1)(1+|x|^\mu)^{\frac{\alpha}{\mu}}(1-\nu t)^{\gamma-1} S \ \text{ in } \ D^k_\epsilon,\nonumber\\
S=\nu+h(x)-b(C+1)^{\beta-1}\gamma^{-1}(1+|x|^\mu)^{\frac{\alpha(\beta-1)}{\mu}}(1-\nu t)^{\gamma(\beta-1)+1} k^{\frac{\alpha(mp-\beta)-(1+p)}{\alpha}},\nonumber
\end{gather}
and therefore
\begin{equation}\label{case3proof-11}
S\geq 1+R, \ \text{ in } \ D^k_{0\epsilon}=D^k_\epsilon\cap \{(x,t): 0<t \leq t_0\},
\end{equation}
where 
\[ R=O(k^{mp-1-(1+p)/\alpha}) \ \text{ uniformly for } \ (x,t)\in D^k_{0\epsilon} \ \text{ as } k\to +\infty.  \]
Hence, we have for $0<\epsilon <<1$ and $k>>1$
\begin{equation}\label{case3proof-12}
L_{k}g\geq 0, \ \text{ in } \ D^k_{0\epsilon},
\end{equation}
\begin{equation}\label{case3proof-13}
g(x,0)\geq u^{\pm \epsilon}_k(x,0), \ \text{ for } \ |x|\leq k^{1/\alpha}|x_\epsilon|,
\end{equation}
\begin{equation}\label{case3proof-14}
g(\pm k^{1/\alpha}x_\epsilon,t) \geq u^{\pm \epsilon}_k(\pm k^{1/\alpha}x_\epsilon,t), \ \text{ for } \  0\leq t \leq t_0.
\end{equation}
From \eqref{case3proof-12}-\eqref{case3proof-14} and comparison theorem it follows that
\begin{equation}\label{case3proof-15}
0\leq u^{\pm \epsilon}_k(x,t) \leq g(x,t), \ \text{ in } \ \bar{D}^k_{0\epsilon}
\end{equation}
Let $G$ be an arbitrary fixed compact subset of 
\[ P=\{(x,t): x\in \mathbb{R}, 0<t\leq t_0\} \]
By choosing $k$ to be so large that $G\subset P$, it follows from \eqref{case3proof-15} that the sequences $\{u^{\pm \epsilon}_{k}\}$ are uniformly bounded in G.
From \cite{Ivanov1,Ivanov2}, it follows that they are uniformly H\"{o}lder continuous in G. From the Arzela-Ascoli theorem and standard diagonalization argument it follows that 
there exist functions $v_{\pm\epsilon}$ such that for some subsequence $k'$ 
\begin{equation}\label{case3proof-16}
\lim_{k'\to+\infty} u^{\pm \epsilon}_{k'}(x,t)=v_{\pm\epsilon}(x,t), \ (x,t)\in P.
\end{equation}
It may be easily checked that $v_{\pm\epsilon}$ is a solution of the CP  \eqref{OP1}, \eqref{IF1} with $u_0=(C\pm\epsilon)(-x)_+^\alpha$.
The remainder of the proof coincides with the proof of Lemma 4 from \cite{AbdullaPrinkey1}.
\begin{lemma}\label{Lemma 3}
If $b>0,\, 0<\beta<mp<1$, and $\alpha=(1+p)/(mp-\beta)$, then the minimal solution $u$ to the CP \eqref{OP1}, \eqref{IF2} has the self-similar form \eqref{sss2},
where the self-similarity function $f_1$ satisfies 
\begin{equation}\begin{cases}\label{selfsimilarNODE3}
\mathcal{L}^0f_1 \equiv   \big(|(f_1^{m})'|^{p-1}(f_1^{m})'\big)'+\frac{mp-\beta}{(1+p)(1-\beta)}\zeta f_1'-\frac{1}{1-\beta}f_1-bf^{\beta}_1=0,~\zeta\in\mathbb{R}, \\
f_1(\zeta)\sim C(-\zeta)^{(1+p)/(mp-\beta)}, \text{ as } \zeta 
\downarrow -\infty, \text{ and } f_1(\zeta) \rightarrow 0, \text{ as } \zeta \uparrow +\infty.
\end{cases}\end{equation}
There exists $\ell_1, \, \lambda >0$ such that for any $\ell \in (-\infty, -\ell_1)$ we have
\begin{equation}
u\Big(\ell t^{\frac{mp - \beta}{(1+p)(1 - \beta)}}, t\Big) = \lambda t^{\frac{1}{1-\beta}}, \, t \geq 0.
\end{equation}
If $0<C<C_*$, then we have
\begin{equation}\label{lambda1}
0 < \lambda < C_* (-\ell)^{\frac{1+p}{mp - \beta}},
\end{equation}
while if $C>C_*$, then $f_1(0)=A_1(m,p,\beta,C,b)=A_{1}>0$.
\end{lemma}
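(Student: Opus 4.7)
My plan is to prove the three parts of the lemma in sequence: the self-similar reduction, the trace identity for $\lambda$, and the two estimates on either side of $C_*$, the last of which uses the stationary solution $u^{*}(x)=C_{*}(-x)_+^{\alpha}$ as the reference barrier.

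First, the self-similar structure comes from scaling invariance. Consider the rescaling $\tilde u_k(x,t)=k\,u(k^{a}x,k^{b}t)$ with $a=(\beta-mp)/(1+p)$ and $b=\beta-1$. A direct verification shows that each term of \eqref{OP1} rescales by the common factor $k^{b+1}=k^{\beta}$, so $\tilde u_k$ again solves \eqref{OP1}. The critical exponent $\alpha=(1+p)/(mp-\beta)=-1/a$ renders the initial datum invariant as well, since $k\,u_0(k^a x)=k^{1+a\alpha}u_0(x)=u_0(x)$. Uniqueness of the minimal solution (Lemma \ref{CT2}) then forces $\tilde u_k\equiv u$ for all $k>0$, and setting $k=t^{1/(1-\beta)}$ yields \eqref{sss2} with $f_1(\zeta):=u(\zeta,1)$. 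Inserting this ansatz into \eqref{OP1} and factoring out $t^{1/(1-\beta)-1}$ reduces the PDE to the ODE \eqref{selfsimilarNODE3}; the asymptotic $f_1(\zeta)\sim C(-\zeta)^\alpha$ at $-\infty$ is inherited from $u_0$, and the decay at $+\infty$ comes from the finite-interface behavior of the minimal solution in this regime. The trace identity is then immediate: restricting the self-similar form to $x=\ell t^{(mp-\beta)/((1+p)(1-\beta))}$ gives $\zeta\equiv\ell$, so $u(\ell t^{(mp-\beta)/((1+p)(1-\beta))},t)=t^{1/(1-\beta)}f_1(\ell)$, and the left-end asymptotic ensures $\lambda=f_1(\ell)>0$ once $|\ell|$ is large enough.

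Next, a direct substitution shows that $u^*$ is an exact weak solution of \eqref{OP1}: the value of $C_*$ in the statement is exactly the one that makes $(|(u^{*m})_x|^{p-1}(u^{*m})_x)_x$ balance $b(u^{*})^{\beta}$ pointwise on $\{x<0\}$, with the exponent matching supplied by the critical relation $\alpha(mp-\beta)=1+p$. In the subcritical regime $0<C<C_*$ one has $u(x,0)=C(-x)_+^\alpha\leq u^*(x)$ with strict inequality on $\{x<0\}$, so Lemma \ref{CT} gives $u\leq u^*$, and the strong maximum principle applied in the nondegenerate set $\{u^*>0\}$ upgrades this to $u(x,t)<u^*(x)$ for $x<0,\,t>0$. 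Translated into self-similar variables this reads $f_1(\zeta)<C_*(-\zeta)^\alpha$ for $\zeta<0$, and evaluation at $\zeta=\ell$ is precisely \eqref{lambda1}.

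In the supercritical regime $C>C_*$, the reverse comparison only yields $u\geq u^*$ and hence $f_1(0)\geq 0$; the strict inequality $f_1(0)>0$ is the delicate point. I would use the compactly supported barrier $\underline u(x,t)=\tilde C(\mu t^{\gamma}-x)_+^\alpha$ with $\gamma=(mp-\beta)/((1+p)(1-\beta))$, $\tilde C=C_*+\delta$, $0<\delta<C-C_*$ and $\mu>0$ small. Writing $y=\mu t^\gamma-x$, a direct computation gives
\[
L\underline u=\tilde C\alpha\mu\gamma\,t^{\gamma-1}y^{\alpha-1}+\bigl[b\tilde C^{\beta}-(m\alpha)^{p}p(m\alpha-1)\tilde C^{mp}\bigr]y^{\alpha\beta}.
\]
The bracket vanishes at $\tilde C=C_*$ by the very definition of $C_*$ and is strictly decreasing there (its derivative has the sign of $\beta-mp<0$), so it is strictly negative for any $\delta>0$. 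Choosing $\mu$ small and localizing the comparison to a strip $-R\leq x\leq\mu t^\gamma$, $0\leq t\leq\tau$, on which the initial inequality $\underline u(x,0)\leq u_0(x)$ holds because $\tilde C<C$ and the lateral inequality $u(-R,t)\geq\underline u(-R,t)$ holds for small $\tau$ by continuity of $u(-R,\cdot)$ at $t=0$, Lemma \ref{CT} produces $u\geq\underline u$ in the strip. Evaluating at $x=0$ yields $u(0,t)\geq\tilde C(\mu t^\gamma)^\alpha>0$, which in view of $u(0,t)=t^{1/(1-\beta)}f_1(0)$ forces $f_1(0)>0$. The main technical difficulty is this last subsolution construction, since the two contributions to $L\underline u$ scale with different powers of $y$: $\mu$, $\delta$, $R$, and $\tau$ must be coordinated so that the negative $y^{\alpha\beta}$ term dominates the positive $y^{\alpha-1}$ term throughout the localization strip.
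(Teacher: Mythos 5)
Your reduction to self-similar form, the trace identity, and the subcritical bound \eqref{lambda1} all essentially follow the paper's route: the paper implements the same scaling $u_k(x,t)=ku(k^{(\beta-mp)/(1+p)}x,k^{\beta-1}t)$ and exploits minimality (phrased as $u\le u_k$ together with a change of variables giving the reverse inequality — the rigorous version of your appeal to Lemma \ref{CT2}, which tacitly requires that the rescaling of the minimal solution is again minimal; this does hold because the scaling is a bijection of the solution set), and the subcritical estimate comes from comparison with the stationary profile $C_*(-x)_+^\alpha$, exactly as you propose. Your invocation of a strong maximum principle to get strictness is heavier machinery than the paper uses and would itself need justification for this doubly degenerate equation, but that is not the main issue.

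The genuine gap is in the supercritical barrier. With $\underline u(x,t)=\tilde C(\mu t^\gamma-x)_+^\alpha$ and $\gamma=(mp-\beta)/((1+p)(1-\beta))\in(0,1)$, the time-derivative term carries the factor $t^{\gamma-1}$, which blows up as $t\to0^+$. Setting $y=\mu t^\gamma-x$ and $\theta=\alpha(1-\beta)-1=(1+p(1-m-\beta))/(mp-\beta)>0$, the inequality $L\underline u\le0$ is equivalent to
\[
y^{\theta}\le \frac{c_0}{\tilde C\alpha\mu\gamma}\,t^{1-\gamma},
\]
where $-c_0<0$ is your bracket; since $1-\gamma=\theta\gamma$, this reads $y\le c\,\mu^{-1/\theta}t^{\gamma}$, i.e.\ $\underline u$ is a subsolution only in a self-similar neighborhood of its own interface. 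At any interior point of your strip with, say, $y\ge R/2$ fixed, the inequality fails as $t\to0^+$, for \emph{every} choice of $\mu,\delta,R,\tau$ — so the coordination of constants you hope for is impossible, and Lemma \ref{CT} cannot be applied on the strip; restricting the comparison to the good region $\{y\le c\,\mu^{-1/\theta}t^{\gamma}\}$ does not help either, because on its lateral boundary one would need $u\ge\underline u$, which is essentially the conclusion being sought. The paper avoids the singularity by moving the interface linearly: with $g=C_1(-x+t)_+^\alpha$, $C_*<C_1<C$, one gets $Lg=bC_1^\beta(-x+t)^{\alpha\beta}S$ with $S=1-(C_1/C_*)^{mp-\beta}+\mathrm{const}\cdot(-x+t)^{\theta}$, with no singular factor in $t$, so $S\le0$ on a thin strip $x_1\le x\le t$, $0\le t\le t_2$ with $|x_1|$ and $t_2$ small; the lateral inequality at $x=x_1$ follows by continuity from $g(x_1,0)<u(x_1,0)$, and evaluating at $x=0$ gives $u(0,t_0)>0$, hence $f_1(0)>0$. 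Your argument is repaired essentially verbatim by replacing $\mu t^\gamma$ with $t$ (any interface of bounded speed works) and shrinking the strip in $x$ rather than taking $R$ arbitrary.
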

\begin{proof}[Proof of Lemma \ref{Lemma 3}]
The first assertion of the lemma is known when $mp\geq1$ (see Lemma 6 of \cite{AbdullaPrinkey1}). 
We define
\begin{equation}\label{scale2}
u_{k}(x,t) = ku(k^{\frac{\beta - mp}{1+p}} x, k^{\beta -1} t), \, k>0.
\end{equation}
It's easy to see that \eqref{scale2} satisfies the CP \eqref{OP1}, \eqref{IF3}. We consider $u$ to be a unique minimal solution of CP \eqref{OP1}, \eqref{IF3} such that 
\begin{equation}\label{U12}
u(x,t) \leq  ku(k^{\frac{\beta - mp}{1+p}} x, k^{\beta -1} t), \, k>0.
\end{equation}
By changing the variables in \eqref{U12} as
\begin{equation}\label{changeofvar}
y= k^{\frac{\beta - mp}{1+p}} x, \, \tau=k^{\beta -1} t,
\end{equation}
we derive \eqref{U12} with opposite inequality and with $k$ replaced with $k^{-1}$. Since $k>0$ is arbitrary, \eqref{U12} follows with "=".
Taking $k=t^{1/(1-\beta)}$, \eqref{scale2} implies \eqref{sss2} with $f_1(\zeta)=u(\zeta,1)$.\\
To prove the second part of the lemma, take arbitrary $x_1<0$. 
Since $u$ is continuous, there exists a number $\delta_1>0$ such that
\begin{equation}\label{est1l3}
\frac{C}{2} (-x_1)^\frac{1+p}{mp - \beta} \leq u(x_1, \delta), \, 0 < \delta \leq \delta_1.
\end{equation}
Furthermore, if $0 < C < C_*$ we have
\begin{equation}\label{est2l3}
u(x_1, \delta) \leq C_* (-x_1)^\frac{1+p}{mp - \beta}, \, 0 < \delta \leq \delta_1.
\end{equation}
Taking $k = t^\frac{1}{1 - \beta} \delta^\frac{1}{\beta -1}$ with
\begin{equation} \nonumber
x = \ell t^\frac{mp - \beta}{(1+p)(1 - \beta)}, \ell = \ell(\delta) = x_1 \delta^\frac{\beta - mp}{(1+p)(1 - \beta)}, \, 0 < \delta \leq \delta_1,
\end{equation}
\eqref{e2l3} follows with
$\lambda = \lambda (\delta) = \delta^\frac{1}{\beta -1} u(x_1, \delta)$ and $\ell_1 = -x_1 \delta_1^\frac{\beta - mp}{(1+p)(1 - \beta)}, \, 0 < \delta \leq \delta_1$. \\
If $0 < C < C_*$, then \eqref{lambda1} follows from \eqref{est1l3}. Let $C>C_*$. To prove that $f_1(0)>0$ it is enough to show that there exists $t_0>0$ such that
\begin{equation}\label{e3l3}
u(0,t_0)>0.
\end{equation}
To do this, we construct a nonnegative subsolution $g(x,t)$ to \eqref{OP1}. Consider
\begin{equation}\nonumber
g(x,t) = C_1 (-x + t)^\frac{1+p}{mp - \beta}_+, \text{ where } C_* < C_1 < C.
\end{equation}
For $x \geq t, \, g(x,t)$ is identically zero and so we automatically have $Lg \leq 0$. For $x<t$, we have
\begin{equation}\nonumber
L g = b C_1^\beta (-x +t)^\frac{\beta (1+p)}{mp - \beta} S,
\end{equation}
where
\begin{equation}\nonumber
S = 1 + C_1^{1-\beta} \frac{1+p}{b(mp - \beta)} (-x +t)^\frac{p(1-m - \beta) +1}{mp - \beta} - \left( \frac{C_1}{C_*} \right)^{mp - \beta}.
\end{equation}
Choosing $x_1 <0, \, t_2 >0$ we have
\begin{equation}\nonumber
Lg \leq 0, \forall \, x_1 \leq x \leq t, \, 0 \leq t \leq t_2.
\end{equation}
We have for $t=0$
\begin{equation}\nonumber
g(x,0) = C_1(-x)^\frac{1+p}{mp - \beta}_+ < C(-x)^\frac{1+p}{mp - \beta} = u_0(x), \, x \geq x_1.
\end{equation}
For $x=x_1$, we have
\begin{equation}\nonumber
g(x_1, 0) = C_1 (-x_1)^\frac{1+p}{mp -\beta} < C(-x_1)^\frac{1+p}{mp - \beta} = u(x_1,0).
\end{equation}
By continuity, there exists a number $\delta > 0$ such that for any $0 \leq t \leq \delta$ we have
\begin{equation}\nonumber
g(x_1, t) \leq u(x_1, t).
\end{equation}
Now letting $t_1 = \text {min} \{ \delta, t_2 \}$ we have
\begin{equation}
0 < g(x,t) \leq u(x,t), \, x_1 \leq x < t, \, 0 \leq t \leq t_1.
\end{equation}
In particular, this gives us $u(0,t_0) > 0, \, 0 \leq t_1 \leq t_1$, hence, $f_1(0) >0$. Lemma \ref{Lemma 3} is proved.
\end{proof}
\begin{lemma}\label{Lemma 4}
Let $b>0,\, 0<\beta<mp<1$, and $\alpha=(1+p)/(mp-\beta)$, and let $u$ be the minimal solution to the CP \eqref{OP1}, \eqref{IF2}. Then $u$ satisfies
\begin{equation}\label{usim4}
u\Big(\ell t^{\frac{mp - \beta}{(1+p)(1 - \beta)}}, t\Big) \sim \lambda t^{\frac{1}{1-\beta}}, \text{ as } t \to 0^+,
\end{equation}
where $\ell_1, \lambda > 0$ are the same as in Lemma \ref{Lemma 3}. 
Furthermore, if $0<C<C_*$, then
\begin{equation}
0 < \lambda < C_* (-\ell)^{\frac{1+p}{mp - \beta}}.
\end{equation}
If $C>C_*$, then
\begin{equation}
u(0,t) \sim A_1 t^\frac{1}{1 -\beta}, \text{ as } t \to 0^+; \, f_1(0) = A_1 >0.
\end{equation}
\end{lemma}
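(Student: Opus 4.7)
The plan is to deduce Lemma~\ref{Lemma 4} from Lemma~\ref{Lemma 3} by reducing the asymptotic initial condition \eqref{IF2} to the pure power case \eqref{IF3}, via the rescaling-and-compactness scheme already exploited in the proof of Lemma~\ref{Lemma 2} (Case~3). Given $\epsilon>0$, choose $x_\epsilon<0$ so that
\[(C-\epsilon)(-x)_+^\alpha \le u_0(x)\le (C+\epsilon)(-x)_+^\alpha,\qquad x\ge x_\epsilon,\]
and let $u_{\pm\epsilon}$ denote the minimal solution on $\{|x|<|x_\epsilon|,\,0<t\le\delta(\epsilon)\}$ with initial data $(C\pm\epsilon)(-x)_+^\alpha$ and with lateral data matched to $u$ at $x=-x_\epsilon$ (and to $(C\pm\epsilon)(-x_\epsilon)^\alpha$ at $x=x_\epsilon$), chosen so that Lemma~\ref{CT} forces the sandwich $u_{-\epsilon}\le u\le u_\epsilon$ in that strip.

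Next, introduce the nonlinear scaling
\[u_k^{\pm\epsilon}(x,t)=k\,u_{\pm\epsilon}\!\bigl(k^{(\beta-mp)/(1+p)}x,\,k^{\beta-1}t\bigr),\qquad k>0,\]
which preserves \eqref{OP1} and fixes the initial profile $(C\pm\epsilon)(-x)_+^\alpha$ because $\alpha=(1+p)/(mp-\beta)$. Exactly as in the proof of Lemma~\ref{Lemma 2} (Case~3), construct a global super-barrier of the form $g(x,t)=(C+1)(1+|x|^\mu)^{\alpha/\mu}(1-\nu t)^\gamma$ to obtain uniform pointwise bounds for $u_k^{\pm\epsilon}$ on compact subsets of $P=\{(x,t):x\in\mathbb{R},\,0<t\le t_0\}$, invoke the local H\"older estimate from \cite{Ivanov1,Ivanov2}, and use Arzel\`a--Ascoli with a standard diagonal argument to pass to a subsequence $k'\to+\infty$ along which $u_{k'}^{\pm\epsilon}\to v_{\pm\epsilon}$ locally uniformly on $P$. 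A routine limit in the weak formulation of Definition~\ref{def: weak soln} shows that $v_{\pm\epsilon}$ is a nonnegative weak solution of the CP \eqref{OP1}, \eqref{IF3} with $C$ replaced by $C\pm\epsilon$. By Lemma~\ref{Lemma 3} the minimal such solution is self-similar with explicit values $\lambda_{\pm\epsilon}$ (respectively $A_1^{\pm\epsilon}$ when $C>C_*$), and a minimality argument based on Lemma~\ref{CT2} identifies $v_{\pm\epsilon}$ with that minimal solution, upgrading the subsequential convergence to full convergence.

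Finally, specialize the scaling parameter to $k=t^{-1/(1-\beta)}$ so that $k^{\beta-1}=t$ and $k\to+\infty$ as $t\to 0^+$. Then the identity
\[u_{\pm\epsilon}\!\bigl(\ell\,t^{(mp-\beta)/((1+p)(1-\beta))},\,t\bigr)=t^{1/(1-\beta)}\,u_k^{\pm\epsilon}(\ell,1),\]
combined with the sandwich $u_{-\epsilon}\le u\le u_\epsilon$ and the convergence $u_k^{\pm\epsilon}(\ell,1)\to f_1^{\pm\epsilon}(\ell)$ just established, yields
\[\lambda_{-\epsilon}\le\liminf_{t\to 0^+}\frac{u(\ell\,t^{(mp-\beta)/((1+p)(1-\beta))},t)}{t^{1/(1-\beta)}}\le\limsup_{t\to 0^+}\frac{u(\ell\,t^{(mp-\beta)/((1+p)(1-\beta))},t)}{t^{1/(1-\beta)}}\le\lambda_{\epsilon},\]
and likewise with $A_1^{\pm\epsilon}$ in place of $\lambda_{\pm\epsilon}$ when $C>C_*$ and $\ell=0$. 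Sending $\epsilon\to 0^+$ then completes the proof. The main obstacle is verifying the continuous dependence $\lambda_{\pm\epsilon}\to\lambda$ and $A_1^{\pm\epsilon}\to A_1$ as $\epsilon\to 0^+$; this is handled by monotonicity of the minimal self-similar profile $f_1$ in the leading coefficient $C$ (inherited from Lemma~\ref{CT2}) together with one further application of the same rescaling/compactness machinery, which pins down the limit uniquely.
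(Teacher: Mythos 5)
Your proposal is correct and takes essentially the same route as the paper: the paper proves Lemma~\ref{Lemma 4} precisely as a ``localization'' of Lemma~\ref{Lemma 3}, carried out by the scheme of Lemma~\ref{Lemma 2} (Case~3) following Lemma~4 of \cite{AbdullaPrinkey1}, which is exactly your sandwich by $(C\pm\epsilon)(-x)_+^\alpha$, the critical rescaling $u_k(x,t)=ku(k^{(\beta-mp)/(1+p)}x,k^{\beta-1}t)$ (the equation being invariant here since $\alpha=(1+p)/(mp-\beta)$ makes the reaction-term exponent vanish), the barrier/H\"older/Arzel\`a--Ascoli compactness, identification with the minimal self-similar solution, and the final $\epsilon\to 0^+$ limit. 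The continuity step $\lambda_{\pm\epsilon}\to\lambda$, $A_1^{\pm\epsilon}\to A_1$ that you flag is likewise left implicit in the paper via the citation, and your monotonicity-plus-compactness handling of it is consistent with that reference.
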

The proof of Lemma \ref{Lemma 4} follows as a localization of the proof of Lemma \ref{Lemma 3}, exactly as local results were proven for Lemma 4 of \cite{AbdullaPrinkey1}.
\begin{lemma}\label{Lemma 5}
If $b>0, \, 0<\beta<mp<1, \text{ and } \alpha>(1+p)/(mp-\beta)$, then the unique weak solution $u$ to the CP \eqref{OP1}, \eqref{IF2} satisfies \eqref{usim3}.
\end{lemma}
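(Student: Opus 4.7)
The plan is to exploit the fact that when $\alpha > (1+p)/(mp-\beta)$, the diffusion term in \eqref{OP1} is of higher order than the absorption term near the initial support boundary $x=0$. Consequently, along the curve $x = \eta_\ell(t) = -\ell t^{1/(\alpha(1-\beta))}$ with $\ell > \ell_*$, I expect $u$ to behave asymptotically like the solution $\bar u(x,t) = [C^{1-\beta}(-x)_+^{\alpha(1-\beta)} - b(1-\beta)t]_+^{1/(1-\beta)}$ of the pure absorption ODE $v_t + bv^\beta = 0$ with initial data $C(-x)_+^\alpha$. A direct computation shows that $\bar u(\eta_\ell(t),t) = t^{1/(1-\beta)}[C^{1-\beta}\ell^{\alpha(1-\beta)} - b(1-\beta)]^{1/(1-\beta)}$, which is exactly the right-hand side of \eqref{usim3}; the condition $\ell > \ell_*$ is precisely what keeps this bracket positive.

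To make this rigorous I would follow the nonlinear scaling and compactness scheme developed in the proof of Lemma \ref{Lemma 2} Case 3. First, using \eqref{IF2}, pick $x_\varepsilon<0$ so that $(C-\varepsilon)(-x)_+^\alpha \le u_0(x) \le (C+\varepsilon)(-x)_+^\alpha$ on $[x_\varepsilon,0]$, and consider auxiliary weak solutions $u_{\pm\varepsilon}$ of \eqref{OP1} on a strip $(x_\varepsilon,-x_\varepsilon)\times(0,\delta)$ with initial data $(C\pm\varepsilon)(-x)_+^\alpha$ and boundary values chosen so that Lemma \ref{CT} yields $u_{-\varepsilon}\le u\le u_{\varepsilon}$ on the strip. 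Then introduce the rescaling
\[
u_{\pm\varepsilon,k}(y,s) \;=\; k^{-\alpha}\,u_{\pm\varepsilon}\bigl(ky,\,k^{\alpha(1-\beta)}s\bigr),\qquad k\in(0,1],
\]
which preserves the initial profile $(C\pm\varepsilon)(-y)_+^\alpha$ and, by a routine computation, makes $u_{\pm\varepsilon,k}$ satisfy
\[
\partial_s u_{\pm\varepsilon,k} \;-\; k^{\gamma}\bigl(|(u_{\pm\varepsilon,k}^{m})_y|^{p-1}(u_{\pm\varepsilon,k}^{m})_y\bigr)_y \;+\; b\,u_{\pm\varepsilon,k}^{\beta} \;=\; 0,
\]
with $\gamma = \alpha(mp-\beta)-(1+p) > 0$ under our hypothesis, so that the diffusion coefficient degenerates as $k\to 0^+$ while the absorption coefficient is preserved.

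Next, bound $\{u_{\pm\varepsilon,k}\}$ uniformly on compact subsets of $\mathbb{R}\times[0,t_0]$ by a supersolution of the type constructed in the proof of Lemma \ref{Lemma 2} Case 3 (the factor $k^\gamma\le 1$ controls the diffusion contribution uniformly in $k$), and use the H\"older continuity results of \cite{Ivanov1,Ivanov2} to obtain local equicontinuity. By Arzel\`a--Ascoli and a diagonal argument, along some $k_j\to 0^+$ one has $u_{\pm\varepsilon,k_j}\to v_{\pm\varepsilon}$ locally uniformly. Passing to the limit in \eqref{weaksolution}, the diffusion flux $k_j^{\gamma}|(u_{\pm\varepsilon,k_j}^m)_y|^{p-1}(u_{\pm\varepsilon,k_j}^m)_y$ tends to $0$ in $L^{(p+1)/p}_{\mathrm{loc}}$ by a localized energy estimate, so the limit $v_{\pm\varepsilon}$ is a continuous solution of the pure absorption ODE $v_s+bv^\beta=0$ with data $(C\pm\varepsilon)(-y)_+^\alpha$. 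Pointwise uniqueness for this first-order ODE identifies $v_{\pm\varepsilon}(y,s) = [(C\pm\varepsilon)^{1-\beta}(-y)_+^{\alpha(1-\beta)} - b(1-\beta)s]_+^{1/(1-\beta)}$, and the full family converges. Evaluating at $(y,s)=(-\ell,1)$ and re-expressing in the original variables via $t=k^{\alpha(1-\beta)}$, $-\ell k = -\ell t^{1/(\alpha(1-\beta))}$, $k^{\alpha}=t^{1/(1-\beta)}$, one obtains $t^{-1/(1-\beta)}\,u_{\pm\varepsilon}(\eta_\ell(t),t) \to [(C\pm\varepsilon)^{1-\beta}\ell^{\alpha(1-\beta)}-b(1-\beta)]^{1/(1-\beta)}$ as $t\to 0^+$; sandwiching $u$ between $u_{\pm\varepsilon}$ and sending $\varepsilon\to 0^+$ produces \eqref{usim3}. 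I expect the main obstacle to be the localized energy bound needed to kill the diffusion flux in the weak formulation: the natural energy identity obtained by testing against $u_{\pm\varepsilon,k}^{m}$ carries the same factor $k^{\gamma}$ in front of $\iint|(u_{\pm\varepsilon,k}^m)_y|^{p+1}$, so one must use cut-offs and Young's inequality to turn this into a flux estimate strong enough that the additional prefactor $k^{\gamma}$ in front of the flux itself is decisive.
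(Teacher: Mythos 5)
Your argument is correct and is essentially the paper's own proof: the paper establishes Lemma \ref{Lemma 5} by citing Lemma 7 of \cite{AbdullaPrinkey1}, whose scheme is precisely your nonlinear scaling with vanishing diffusion coefficient $k^{\gamma}$, $\gamma=\alpha(mp-\beta)-(1+p)>0$, followed by the localization via $u_{\pm\varepsilon}$, uniform barrier bounds, H\"older equicontinuity, Arzel\`a--Ascoli, and passage to the unique nonnegative solution of the absorption ODE, exactly as in the proof of Lemma \ref{Lemma 2}, Case 3. Your rescaling $u_{\pm\varepsilon,k}(y,s)=k^{-\alpha}u_{\pm\varepsilon}(ky,\,k^{\alpha(1-\beta)}s)$ with $k\to 0^{+}$ is just the reparametrization $k\mapsto k^{-\alpha}$ of the scaling $u_k(x,t)=k\,u(k^{-1/\alpha}x,\,k^{\beta-1}t)$, $k\to+\infty$, used there, and your evaluation at $(y,s)=(-\ell,1)$ with $k=t^{1/(\alpha(1-\beta))}$ recovers \eqref{usim3} in the same way.
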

The proof of Lemma \ref{Lemma 5} coincides with the proof of Lemma 7 of \cite{AbdullaPrinkey1}. 
\section{Proof of the main results}\label{sec4}

In this section we prove the main results described in Section \ref{sec2}.\\

\begin{proof}[Proof of Theorem \ref{theorem 1}]
From Lemma \ref{Lemma 2}, the asymptotic formulas \eqref{sim1}, \eqref{f1} follow. For arbitrary sufficiently small $\epsilon>0$, from \eqref{sim1}, there exists a number $\delta_1=\delta_{1}(\epsilon)>0$ such that
\begin{equation}\label{fpr1}
(A_0-\epsilon)t^{\alpha/(1+p-\alpha(mp-1))}\leq u(0,t) \leq (A_0+\epsilon)t^{\alpha/(1+p-\alpha(mp-1))}, \, 0\leq t \leq \delta_{1}(\epsilon),
\end{equation}
where $A_0 = f(0)>0$. Consider a function 
\begin{equation}\label{g1}
g(x,t) = t^\frac{1}{1-\beta} f_1(\zeta), \, \zeta = xt^\frac{\beta - mp}{(1+p)(1-\beta)}.
\end{equation}
\begin{equation}\label{lg1}
Lg = t^\frac{\beta}{1-\beta} \mathcal{L}_0 f_1,
\end{equation}
with 
\begin{equation}\label{lg2}
\mathcal{L}_0 f_1 = \frac{\beta - mp}{(1+p)(1-\beta)} \zeta f'_1 + \frac{1}{1-\beta} f_1 - (|(f^m_1)'|^{p-1} (f^m_1)')' + bf_1^\beta.
\end{equation}
We choose
\begin{equation}
f_1 = C_0 (\zeta_0 - \zeta)^\frac{1+p}{mp -\beta}_+, \, 0 < \zeta < +\infty
\end{equation}
with $C_0, \zeta_0 >0$ to be determined. From \eqref{lg2} we have
\begin{equation}\label{lg3}
\mathcal{L}_0 f_1 = b C_0^\beta (\zeta_0 - \zeta)^\frac{(1+p) \beta}{mp - \beta}_+ \left[ 1 - \left( \frac{C_0}{C_*} \right)^{mp - \beta} + \frac{C_0^{1-\beta}}{b(1-\beta)} \zeta_0 (\zeta_0 - \zeta)^\frac{1+p(1-m-\beta)}{mp - \beta}_+ \right].
\end{equation}
Taking $C_0=C_1$ and $\zeta_0=\zeta_1$ (see Appendix, \ref{appendix}) we have
\begin{equation}
\mathcal{L}_0 f_1 \leq b C_1^\beta (\zeta_1 - \zeta)^\frac{(1+p) \beta}{mp - \beta}_+ \left[ 1 - \left( \frac{C_1}{C_*} \right)^{mp - \beta} + \frac{C_1^{1-\beta}}{b(1-\beta)} \zeta_1^\frac{(1+p)(1-\beta)}{mp - \beta} \right] = 0.   \label{l0f1}
\end{equation}
From \eqref{lg1} it follows that
\begin{gather}
Lg = t^\frac{1}{1-\beta} \mathcal{L}_0 f_1 \leq 0, \text{ for } 0 < x < \zeta_1 t^\frac{mp-\beta}{(1+p)(1-\beta)}, \, t>0, \label{compthm1} \\
Lg = 0, \text{ for } x \geq \zeta_1 t^\frac{mp -\beta}{(1+p)(1-\beta)}, \, t>0. \label{compthm2}
\end{gather}
 
Lemma \ref{CT} implies that $g$ is a subsolution of \eqref{OP1} for $x, \, t>0$. 
Since $1/(1-\beta) > \alpha/(1+p - \alpha (mp -1 ))$, there is a number $\delta_2 > 0$ such that
\begin{gather}
g(0,t) \leq u(0,t), \, 0 \leq t \leq \delta_2. \label{compthm3}
\end{gather}
Clearly we have $g(x,0) = u(x,0) = 0, \, x \geq 0$. Fixing $\epsilon=\epsilon_0$ and taking $\delta=(\delta_1, \delta_2)$, together with \eqref{compthm1}, \eqref{compthm2}, and \eqref{compthm3}, the left-hand sides of \eqref{result1}, \eqref{result2} follow. \\

To prove an upper estimation, we first establish a rough upper estimation for the solution 
\begin{equation} \label{upper bound}
u(x,t) \leq D t^\frac{1}{1-mp} x^\frac{1+p}{mp-1}, \, 0 < x < +\infty, \, 0 < t< +\infty.
\end{equation}
This estimation follows directly from Lemma \ref{CT} since the right-hand side of \eqref{upper bound} is a solution to \eqref{OP1} with $b=0$. Using \eqref{upper bound} we can now establish a more accurate estimation, \eqref{result1}. 
Define the region
\[ G_{\ell,\delta} = \{ (x,t) : \zeta_\ell (t) = \ell t^\frac{mp-\beta}{(1+p)(1-\beta)} < x < +\infty, 0 < t \leq \delta \}. \]
We consider a function $g$ of the same form as earlier, with $C_0 = C_*$ and $\zeta=\zeta_2$ in $G_{\ell_0,\delta}$ for some $\ell_0>0$.
From \eqref{lg1} and \eqref{lg3} it follows that
\begin{gather} 
Lg = t^\frac{1}{1-\beta} \mathcal{L}_0 f_1 \geq 0, \text{ for } 0<  x < \zeta_2 t^\frac{mp-\beta}{(1+p)(1-\beta)}, \, t>0, \label{compthm4}\\
Lg = 0, \text{ for } x \geq \zeta_2 t^\frac{mp-\beta}{(1+p)(1-\beta)}, \, t>0. \label{compthm5}
\end{gather}
Taking $\delta>0$ it follows that, from \eqref{upper bound}, we have
\begin{equation}\label{fix1}
u(\zeta_{\ell_0} (t), t) \leq D t^\frac{1}{1-\beta} \ell_0^\frac{1+p}{mp-1} = t^\frac{1}{1-\beta} C_* (\zeta_2 - \ell_0)^\frac{1+p}{mp-\beta} = g(\zeta_{\ell_0} (t), t), \text{ for } 0 \leq t \leq \delta.
\end{equation}
Applying Lemma \ref{CT} in $G_{\ell_0,\delta}$, the right-hand sides of \eqref{result1}, \eqref{result2} follow from \eqref{compthm4}, \eqref{compthm5}, and \eqref{fix1}, which proves the result.
\end{proof}
\begin{proof}[Proof of  Theorem \ref{theorem 2}]
Assume that $u_0$ is defined as \eqref{IF3}. The self-similar solution \eqref{sss2} follows from Lemma \ref{Lemma 3}. The proof of estimation \eqref{result3} when $C>C_*$ (also when $u_0$ is given through \eqref{IF2}) coincides with the proof given in \cite{AbdullaPrinkey1}. 
Let $0<C<C_*$. The formula \eqref{e2l3} follows from Lemma \ref{Lemma 3}. The proof of the right-hand side of \eqref{result3} (also when $u_0$ is given through \eqref{IF2}) coincides with the proof given in \cite{AbdullaPrinkey1}. To prove the left-hand side of \eqref{result3}, consider the function $g$ from \eqref{g1} with
\[f_1(\zeta) =  C_* (-\zeta_5 - \zeta)^\frac{1+p}{mp-\beta}_+, \, \zeta \in \mathbb{R}, \]
and so (see \eqref{lg1}, \eqref{lg2})
\begin{equation}\label{lg4}
Lg \leq 0, \text{ in } G_{-\ell_1, +\infty}.
\end{equation}
Moreover, we have
\begin{equation}
u(-\ell_1 t^\frac{mp-\beta}{(1+p)(1-\beta)}, t) = \lambda t^\frac{1}{1-\beta} = g(-\ell_1 t^\frac{mp-\beta}{(1+p)(1-\beta)}, t) =  t^\frac{1}{1-\beta}C_* ( \ell_1-\zeta_5)^\frac{1+p}{mp-\beta}_+, \label{ul1}
\end{equation}
\begin{equation}
u(x,0)=g(x,0)=0, \, 0\leq x \leq x_0, \label{ul2}
\end{equation}
\begin{equation}
u(x_0,t)=g(x_0,t)=0, \, t\geq0 \label{ul3},
\end{equation}
where $x_0>0$ is an arbitrary fixed number. Applying Lemma \ref{CT} in 
\[G'_{-\ell_1, +\infty} = G_{-\ell_1, +\infty} \cap \{x<x_0\},\]
the lower estimation from \eqref{result3} follows.
Now suppose $u_0$ satisfies \eqref{IF2}. From \eqref{usim4}, it follows that for arbitrary $\epsilon >0$, there exists a number $\delta = \delta (\epsilon) >0$ such that
\[ (\lambda - \epsilon) t^\frac{1}{1-\beta} \leq u(-\ell_1 t^\frac{mp-\beta}{(1+p)(1-\beta)}, t) \leq (\lambda + \epsilon)t^\frac{1}{1-\beta}, \, 0 \leq t \leq \delta. \]
Using this estimation, the left-hand side of \eqref{result3} may be established locally in time. The proof follows as in the global case given above, except that $\lambda$ should be replaced with $\lambda\pm \epsilon$. As in \cite{AbdullaPrinkey1}, \eqref{sss2} and \eqref{result3} imply \eqref{eta2} and \eqref{result4}.
\end{proof}
\begin{proof}[Proof of Theorem \ref{theorem 3}]
The asymptotic estimate \eqref{usim3} follows from Lemma \ref{Lemma 4}. The proof of the asymptotic estimate \eqref{etasim3} coincides with the proof given in \cite{AbdullaPrinkey1}. 
\end{proof}
\begin{proof}[Proof of Theorem \ref{theorem 4}]
The asymptotic estimation \eqref{sim1} is proved in Lemma \ref{Lemma 2}. From \eqref{sim1}, \eqref{fpr1} follows. 
The function
\begin{equation}
g(x,t) = t^\frac{1}{1-mp} \phi (x).
\end{equation}
is a solution of \eqref{OP1}.
Since $1/(1-mp)>\alpha(1+p+\alpha(1-mp))^{-1}$, there exists $\delta>0$ such that 
 \[u(0,t)\geq (A_0-\epsilon)t^{\frac{\alpha}{1+p+\alpha (1-mp)}}\geq t^{\frac{1}{1-mp}}=g(0,t),\qquad 0\leq t \leq \delta.\]
 \[u(x,0)=g(x,0)=0,\qquad 0\leq x<\infty\]
 Therefore, from Lemma~\ref{CT}, the left-hand side of \eqref{MR41} follows. Let us prove the right-hand side of \eqref{MR41}. 
For all $\epsilon>0$ and consider a function 
  \[g_\epsilon(x,t)=(t+\epsilon)^{1/(1-mp)}\phi(x),\]
   \[g_\epsilon(0,t)=(t+\epsilon)^{1/(1-mp)}\phi(0)=(t+\epsilon)^{1/(1-mp)}\geq\epsilon^{1/(1-mp)}\geq\]
   \[\geq \big(A_0+\epsilon\big) t^{\frac{\alpha}{1+p+\alpha (1-mp)}}\geq u(0,t),\;\;\text{for}\;0\leq t\leq \delta_\epsilon= \big[(A_0+\epsilon\big)^{-1}\epsilon^{1/(1-mp)}\big]^{\frac{1+p+\alpha (1-mp)}{\alpha}},\]
  From the Lemma~\ref{CT}, the right-hand side of \eqref{MR41} follows.
  
Intergration of \eqref{phiODE1} implies \eqref{ODEsolution1}.
By rescaling $x\rightarrow \epsilon^{-1}x, \epsilon>0$ from \eqref{ODEsolution1} we have
\begin{equation*}
\frac{x}{\epsilon}= \displaystyle\int_{\phi(\frac{x}{\epsilon})}^{1} \frac{m}{s} \left[ \frac{b}{p} + \frac{m(1+p)}{p(1-mp)(1+m)} s^{1-mp} \right]^{-\frac{1}{1+p}}ds.
\end{equation*}
Change of variable $z=-\epsilon \ln y$ implies
\begin{equation}\label{rescaledsolution}
x= {\mathcal F}[\Lambda_{\epsilon}(x)],
\end{equation}
where 
\[ {\mathcal F}(y)=\int_{0}^{y} m \left[ \frac{b}{p} + \frac{m(1+p)}{p(1-mp)(1+m)} e^{z(mp-1)/\epsilon} \right]^{-\frac{1}{1+p}}dz, \]
\[\Lambda_{\epsilon}(x) = -\epsilon\ln\phi \Big(\frac{x}{\epsilon}\Big).\]
From \eqref{rescaledsolution} it follows that
\begin{equation}\label{rescaledsolution1}
\Lambda_{\epsilon}(x)]={\mathcal F}^{-1}(x),
\end{equation}
where ${\mathcal F}^{-1}$ is an inverse function of ${\mathcal F}$.
Since $0<mp<1$ it easily follows that
\begin{equation}\label{rescaledsolution2}
\lim_{\epsilon\to 0}{\mathcal F}(y)=m(b/p)^{-\frac{1}{1+p}}y, \ \lim_{\epsilon\to 0}{\mathcal F}^{-1}(y)=m^{-1}(b/p)^{\frac{1}{1+p}}y,
\end{equation}
for $y\geq 0$, and the convergence is uniform in bounded subsets of $\mathbb{R}^+$. From
\eqref{rescaledsolution1}, \eqref{rescaledsolution2} it follows that
\begin{equation}\label{rescaledsolution3}
-\lim_{\epsilon\to0^+}\epsilon\ln\phi\left(\frac{x}{\epsilon}\right) = m^{-1}(b/p)^{\frac{1}{1+p}}x.
\end{equation}
By letting $y=x/\epsilon$, the estimate \eqref{phi41} follows. Estimation \eqref{phiest1}, and accordingly also \eqref{philim2},\eqref{uest1},\eqref{ulim2} easily follow from \eqref{ODEsolution1}, \eqref{ODEsolution2}.
\end{proof}
\begin{proof}[Proof of Theorem \ref{theorem 5}]
Let either $b>0, \, \beta > mp$ or $b<0, \beta \geq 1$. The asymptotic estimation \eqref{sim1} follows from Lemma \ref{Lemma 2}. Take arbitrary small $\epsilon>0$. From \eqref{sim1}, there exists a number $\delta_1=\delta_1(\epsilon)>0$ such that \eqref{fpr1} holds. Let $\beta\geq1$, and consider a function
\begin{equation}\label{g51}
g(x,t) = t^\frac{\alpha}{1+p+\alpha(1-mp)} f(\xi), \, \xi = xt^\frac{-1}{1+p+\alpha (1-mp)}.
\end{equation}
We have
\begin{equation}\label{lg51}
Lg = t^\frac{\alpha mp - 1 -p}{1+p +\alpha (1-mp)} L_1 f,
\end{equation}
where
\begin{equation}\label{lg52}
\begin{aligned}
L_1 f = \frac{\alpha}{1+p+\alpha (1-mp)} f - \frac{1}{1+p + \alpha (1-mp)} \xi f'- \\
- (|(f^m)'|^{p-1} (f^m)')' + bt^\frac{1+p-\alpha (mp-\beta)}{1+p+\alpha(1-mp)} f^\beta.
\end{aligned}
\end{equation}
As a function $f$ we select
\begin{equation}\label{f5}
f(\xi) = C_0 (\xi_0 + \xi)^\frac{1+p}{mp-1}, \, \xi \geq 0,
\end{equation}
where $C_0$ and $\xi_0$ are positive constants. From \eqref{lg52} we have
\begin{equation}\label{lg5f}
\begin{aligned}
L_1 f = \frac{1}{1+p+\alpha (1-mp)} C_0 (\xi_0 + \xi)^\frac{1+p}{mp-1}\cross\\
\cross \left[ R(\xi) + b(1+p+\alpha(1-mp))t^\frac{1+p-\alpha(mp-\beta)}{1+p+\alpha(1-mp)} C_0^{\beta -1} (\xi_0 + \xi)^\frac{(1+p)(\beta-1)}{mp-1} \right],
\end{aligned}
\end{equation}
where
\begin{equation}\label{R51}
R(\xi) = \alpha - \frac{(1+p+\alpha(1-mp))(m(1+p))^p p(m+1)}{(1-mp)^{p+1}} C_0^{mp-1} + \frac{1+p}{1-mp} \xi (\xi_0+\xi)^{-1}.
\end{equation}
To prove an upper estimation we take $C_0=C_6$ and $\xi_0=\xi_2$ (see Appendix, \ref{appendix}). Then we have
\begin{equation}
R(\xi) \geq  \alpha \frac{\mu_b -1}{\mu_b}, \text{ for } \xi\geq0 \label{RC0}.
\end{equation}
From \eqref{lg5f} it follows that
\[L_1 f \geq 0, \text{ for } \xi \geq 0, \, 0\leq t\leq \delta_2, \]
where 
\[\delta_2=\delta_1, \, \text{ if } b>0; \, \delta_2=\min(\delta_1, \delta_3), \, \text{ if } b<0,\]
and 
\[\delta_3 = \left[ \frac{ \alpha \epsilon (A_0 + \epsilon)^{1-\beta}}{(1+\epsilon) (-b(1+p+\alpha (1-mp)))} \right]^\frac{1+p+\alpha (1-mp)}{1+p  + \alpha (\beta - mp)}.\]
Hence, from \eqref{lg51} we have
\begin{equation}\label{lg53}
Lg \geq 0, \text{ for } 0\leq x<+\infty,\, 0\leq t \leq\delta_2. 
\end{equation}
From \eqref{fpr1} and Lemma \ref{CT}, the right-hand side of \eqref{result5} follows with $\delta=\delta_2$. To prove a lower bound in this case we take $C_0=C_5$ and $\xi_0=\xi_1$. If $b>0$ and $\beta < \frac{p(1-m)+2}{1+p}$, from \eqref{lg5f} we have
\begin{equation}\label{R52}
\begin{aligned}
R(\xi) \leq \alpha - \frac{(1+p+\alpha(1-mp))(m(1+p))^p p(m+1)}{(1-mp)^{p+1}} C_5^{mp-1} + \frac{1+p}{1-mp} =\\ =\frac{\epsilon}{\epsilon -1} \left(\alpha + \frac{1+p}{1-mp} \right),
\end{aligned}
\end{equation}
and so
\begin{equation}\label{lg54}
L_1 f \leq 0, \text{ for } 0\leq x<+\infty,\, 0\leq t \leq \delta_4,
\end{equation}
where $\delta_4 = \min(\delta_1, \delta_5)$ and
\[\delta_5 = \left[ \frac{(A_0 - \epsilon)^{1-\beta} \epsilon}{b(1-mp)(1-\epsilon)} \right]^\frac{1+p+\alpha (1-mp)}{1+p - \alpha (mp-\beta)}.\]
From \eqref{lg54} it follows that
\begin{equation}\label{lg55}
Lg \leq 0, \text{ for } 0\leq x<+\infty,\, 0\leq t \leq \delta_4. 
\end{equation}
If either $b<0, \, \beta \geq 1$ or $b>0, \, \beta \geq \frac{p(1-m)+2}{1+p}$, from \eqref{lg5f} we have
\begin{multline}\label{lg56}
L_1 f = \frac{1}{1+p+\alpha (1-mp)} C_5 (\xi_1 + \xi)^\frac{1+p+1-mp}{mp-1} \cross\\
\cross \Big[ R_1 (\xi) + b(1+p+\alpha (1-mp)t^\frac{1+p-\alpha (mp-\beta)}{1+p + \alpha (1-mp)} C_5^{\beta -1} (\xi_1 + \xi)^\frac{1-mp+(1+p)(1-\beta)}{1-mp} \Big],
\end{multline}
where 
\begin{equation}\label{R53}
\begin{aligned}
R_1 (\xi) = \alpha (\xi_1 +\xi) - \frac{(1+p+\alpha (1-mp))(m(1+p))^p p(m+1)}{(1-mp)^{p+1}} C_5^{mp-1} (\xi_1 + \xi) +\\ +\frac{1+p}{1-mp} \xi,
\end{aligned}
\end{equation}
which again imply \eqref{lg54}, where $\delta_4 = \delta_1$ if $b<0$, $\delta_4=\min(\delta_1, \delta_5)$ if $b>0$, where
\[\delta_5 = \left[ \frac{1+p}{(1-mp)b(1+p+\alpha (1-mp))} (A_0 - \epsilon)^{1-\beta} \right]^\frac{1+p+\alpha (1-mp)}{1+p - \alpha (mp-\beta)}.\]
As before \eqref{lg55} follows from \eqref{lg56}. From \eqref{fpr1} and Lemma \ref{CT}, the left-hand side of \eqref{result5} follows with $\delta=\delta_4$. Therefore, \eqref{result5} is proved with $\delta=\min(\delta_2,\delta_4)$. 

Let $b>0$ and $\beta\geq1$. The upper estimation \eqref{diffbound} follows directly from the Lemma \ref{CT}, since the right-hand side is a solution of \eqref{OP1} with $b=0$. Let $\beta \geq \frac{p(1-m)+2}{1+p}$. Fixe $\epsilon=\epsilon_0$ and take $\delta=\delta(\epsilon_0)>0$ in \eqref{result5}. From the left-hand side of \eqref{result5} and \eqref{diffbound}, \eqref{asymp1} follows. However, if $b>0$ and $1\leq\beta <\frac{p(1-m)+2}{1+p}$, from \eqref{result5} and \eqref{diffbound} it follows that for any fixed $t \in (0,\delta(\epsilon)]$ that

\[ D(1-\epsilon)^\frac{1}{1-mp} \leq \liminf_{x \to + \infty} u t^\frac{1}{mp-1}x^\frac{1+p}{1-mp} \leq \limsup_{x \to +\infty} u t^\frac{1}{mp-1}x^\frac{1+p}{1-mp} \leq D. \]
Since $\epsilon>0$ is arbitrary, \eqref{asymp2} follows. 
Letting $b<0$ and $\beta \geq 1$, we prove \eqref{fasymp1}. Consider a function
\[\bar{g} (x,t) = D(1-\epsilon)^\frac{1}{mp-1} t^\frac{1}{1-mp} x^\frac{1+p}{mp-1},\] 
in $G= \{ (x,t) : \mu t^\frac{1}{1+p+\alpha (1-mp)} < x < +\infty, 0 \leq t \leq \delta \}$, where $\mu$ is as defined in \eqref{mu1}.  Let $g(x,t) = \bar{g} (x,t)$ for $(x,t) \in \bar{G}/(0,0)$, and let $g(0,0) = 0$. Then we have
\[Lg = \frac{D}{1-mp} (1-\epsilon)^\frac{1}{mp-1} t^\frac{mp}{1-mp} x^\frac{1+p}{1-mp} H, \text{ in } G, \] where
\[H=\epsilon + bD^{\beta-1} (1-\epsilon)^\frac{\beta-1}{mp-1} (1-mp)t^\frac{\beta - mp}{1-mp} x^\frac{(1+p)(\beta-1)}{mp-1}. \]
We then have
\[ H \geq \epsilon + bD^{\beta-1} (1-\epsilon)^\frac{\beta -1}{mp-1} (1-mp) \mu^\frac{(1+p)(\beta -1)}{mp-1} t^\frac{1+p+\alpha (\beta - mp)}{1+p+\alpha (1-mp)}, \text{ in } G. \]
It follows that 
\[H\geq 0,  \text{ in } G, \text{ for } \delta \in (0, \delta_0],\]
\[\delta_0 = \left[ \frac{\epsilon (1-\epsilon)^\frac{\beta-1}{1-mp} \mu^\frac{(1+p)(\beta-1)}{1-mp}}{b(1-mp)D^{\beta-1}} \right]^\frac{1+p+\alpha (1-mp)}{1+p + \alpha (\beta-mp)}, \]
so we have
\begin{equation}\label{lg56}
Lg \geq 0 \text{ in } G.
\end{equation}
Moreover, we have that
\[ g(\mu t^\frac{1}{1+p+\alpha (1-mp)}, t) = (A_0+\epsilon)t^\frac{\alpha}{1+p+\alpha (1-mp)}, \text{ for } 0\leq t \leq \delta. \]
From \eqref{result5}, it follows that
\[u(\mu t^\frac{1}{1+p+\alpha (1-mp)}, t) \leq (A_0+\epsilon)t^\frac{\alpha}{1+p+\alpha (1-mp)}, \text{ for } 0\leq t \leq \delta.  \]
Therefore, we have that
\begin{equation}\label{estg5}
g \geq u, \text{ on } \bar{G}\setminus G.
\end{equation}
From \eqref{lg56}, \eqref{estg5}, and Lemma \ref{CT}, the desired estimation \eqref{fasymp1} follows. Since $\epsilon > 0$ is arbitrary, from the left-hand side of \eqref{result5} and \eqref{fasymp1}, \eqref{asymp1} follows as before. 
Let $b>0$ and $0<mp<\beta<1$. The left-hand side of \eqref{mpbeta1} may be proved as the left-hand side of \eqref{result1} was previously proved. The only difference being that we take $f_1(\zeta) = C_*(1-\epsilon)(\zeta_8 + \zeta)^\frac{1+p}{mp-\beta}_+$ in \eqref{g1}, \eqref{lg1}. From \eqref{result5}, it follows that for any fixed $t \in (0, \delta]$, where $\delta$ is independent of $\epsilon$, we have
\[C_*(1-\epsilon)\leq \liminf_{x \to + \infty} u x^\frac{1+p}{\beta-mp} \leq \limsup_{x \to +\infty} u x^\frac{1+p}{\beta-mp} \leq C_*.\]
Since $\epsilon>0$ is arbitrary, \eqref{asymp3} follows. 

Now, let $b=0$. First assume that $u_0$ is defined by \eqref{IF3}. The self-similar form \eqref{sss0} and the formula \eqref{f1} follow from Lemma \ref{Lemma 1}. To prove \eqref{ubb0}, again consider the function $g$ as in \eqref{g51}, which satisfies \eqref{lg51} with $b=0$. As a function $f$ take \eqref{f5}. Then we derive \eqref{lg5f} with $b=0$. To prove an upper estimate we take $C_0=C_7$, $\xi_0=\xi_4$ and from \eqref{R51} we have
\[ R(\xi) \geq \alpha - \frac{(1+p+\alpha (1-mp))}{1-mp} \left( \frac{C_7}{D} \right)^{mp-1}=0, \]
which implies \eqref{lg53} with $\delta_2=+\infty$. As before, from \eqref{lg53} and Lemma \ref{CT}, the right-hand side of \eqref{ubb0} follows. The left-hand side of \eqref{ubb0} may be established similarly by choosing $C_0=D$ and $\xi_0=\xi_3$. To prove estimation \eqref{diffbound} consider
\[ g_{\mu} (x,t) = D (t+\mu)^\frac{1}{1-mp} (x+\mu)^\frac{1+p}{mp-1}, \, \mu>0, \]
which is a solution of \eqref{OP1} when $x>0$ and $t>0$. Since 
\[ u(0,t) \leq D \mu^\frac{p}{mp-1} \leq g_\mu (0,t), \text{ for } 0 \leq t \leq T(\mu), \]
where
\[T(\mu) = \left[ \frac{D}{A_0} \mu^\frac{p}{mp-1} \right]^\frac{1+p+\alpha (1-mp)}{\alpha}, \]
Lemma \ref{CT} implies
\[ u(x,t) \leq g_\mu (x,t), \, 0 < x < +\infty, \, 0 \leq t \leq \text{min}(\delta_1, T(\mu)). \]
Letting $\mu \to 0^+$, \eqref{diffbound} can be easily derived. From \eqref{diffbound} and \eqref{ubb0} it follows that for any fixed $t>0$ the formula \eqref{asymp1} is valid. If $u_0$ satisfies \eqref{IF2} with $\alpha>0$, then \eqref{sim1} and \eqref{fpr1} follow from Lemma \ref{Lemma 1}. In a similar way, we can prove that for arbitrary sufficiently small $\epsilon>0$, there exists a number $\delta=\delta(\epsilon)>0$, such that \eqref{ubb0} is valid for $0\leq t \leq \delta(\epsilon)$, however, $A_0$ should be replaced with $A_0-\epsilon$ on the left-hand side and $A_0+\epsilon$ on the right-hand side. From the local analog of \eqref{ubb0} and \eqref{diffbound}, for any fixed $t\in (0, \delta]$, the formula \eqref{asymp1} is valid.
\end{proof}
\section{Conclusions}\label{sec5}

This paper presents a full classification of the short-time behavior of the interfaces and local solutions near the interfaces or at infinity for the Cauchy problem for the nonlinear double degenerate type reaction-diffusion equation of turbulent filtration in the case of fast diffusion
\[ u_t=(|(u^{m})_x|^{p-1}(u^{m})_x)_x-bu^{\beta}, \, x\in \mathbb{R}, \, 0<t<T, \, 0<mp<1, \, \beta >0,\]
with
\[u(x,0)=u_0(x)\sim C(-x)_+^{\alpha},\, \text{as } \, x\rightarrow 0^{-},\, \text{for some }  C>0,\, \alpha>0,\]
and either $b\geq 0$ or $b<0, \, \beta\geq 1$.
The classification is based on the relative strength of the diffusion and absorption forces. The following is a summary of the main results:
\begin{itemize}
\item If $b>0, \, 0<\beta<mp$, and $0<\alpha<(1+p)/(mp-\beta)$, then diffusion weakly dominates over the absorption and the interface expands with asymptotic formula given by
\[\eta(t)\sim \psi(C, m, p,\alpha)t^{(mp-\beta)/(1+p)(1-\beta)}, \text{ as } \, t\to 0^+, \]
where, $\psi(C, m, p, \alpha)>0$.
\item If $b>0, \, 0<\beta <mp$, and $\alpha=(1+p)/(mp-\beta)$, then diffusion and absorption are in balance, and there is a critical value $C_*$ such that the interface expands or shrinks accordingly as $C>C_*$ or $C<C_*$ and
\[\eta(t) \sim\zeta_*(C, m, p) t^{(mp-\beta)/(1+p)(1-\beta)}, \text{ as } \, t\to 0^+,\]
where $\zeta_* \lessgtr 0$ if $C\lessgtr C_*$.
\item If $b>0, \, 0<\beta<mp$, and $\alpha >(1+p)/(mp-\beta)$, then absorption strongly dominates over diffusion and the interface shrinks with asymptotic formula given by
\[
\eta(t)\sim -\ell_*(C, m, p, \alpha, \beta) t^{1/\alpha(1-\beta)}, \text{ as } \, t\rightarrow 0^+,
\]
where, $\ell_*(C, m, p, \alpha, \beta)>0$.
\item If $b>0, \, 0<\beta = mp <1$, and $\alpha>0$, then domination of the diffusion over absorption is moderate, there is an infinite speed of propagation, and the solution has exponential decay at infinity.
\item If either $b>0, \, \beta>mp$ or $b<0, \, \beta\geq 1$, then diffusion strongly dominates over the absorption, and the solution has power type decay at infinity independent of $\alpha >0$, which coincides with the asymptotics of the fast diffusion equation ($b=0$).
\end{itemize}
\section{Appendix A}\label{appendix}
We give here explicit values of the constants used in Sections \ref{sec2}, \ref{sec2a}, and \ref{sec4}.\\
I. $0<\beta<mp$ and $0<\alpha<(1+p)/(mp-\beta)$\\
\[ \zeta_1 =  (b(1- \beta))^\frac{mp -1}{(1+p)(1-\beta)} (m(1+p))^\frac{p}{1+p} (p(m+\beta))^\frac{1}{1+p}(mp-\beta)^\frac{p(m+\beta -1) -1}{(1+p)(1-\beta)} (1-mp)^\frac{1-mp}{(1+p)(1-\beta)}, \]
\begin{equation*}
C_1 = \left( \frac{1-\beta}{1 - mp} \right)^\frac{1}{mp -\beta} C_*,
\end{equation*}
\begin{equation*}
\ell_0 = \bigg(\frac{1-mp}{mp-\beta}\bigg)^\frac{mp-1}{1-\beta}\bigg(\frac{D}{C_*}\bigg)^\frac{(mp-1)(\beta-mp)}{(1+p)(1-\beta)}, \, \zeta_2 = \ell_0\frac{1-\beta}{mp-\beta}.
\end{equation*}
II. $0<\beta<mp$ and $\alpha=(1+p)/(mp-\beta)$\\
\begin{equation*}C_2 = A_1 \zeta_3^\frac{1+p}{\beta-mp}, \, A_1 = f_1(0),\end{equation*}
\[\zeta_3 =  (m(1+p))^\frac{p}{p+1} (m+\beta)^\frac{1}{1+p} p^\frac{1}{1+p} (mp-\beta)^{-1} (1-\beta)^\frac{1}{1+p} A_1^\frac{m-1}{1+p}\Big[b(1-\beta) A_1^{\beta-1} + 1\Big]^\frac{-1}{1+p},\]
\begin{equation*}
\zeta_4 = \left( \frac{A_1}{C_*} \right)^\frac{mp-\beta}{1+p}, \, \zeta_5 = \ell_1 - \left(\frac{\lambda}{C_*} \right)^\frac{1+p}{mp-\beta}, \, C_3 = C \left( \frac{1}{1-\delta_* \Gamma} \right)^\frac{1+p}{mp-\beta},
\end{equation*}
\begin{equation*}
\Gamma = 1 - \left(\frac{C}{C_*} \right)^\frac{mp-\beta}{1+p}, \, \zeta_6 = \delta_* \Gamma \ell_2, \, \delta_* \text{ satisfies } g(\delta_*) = \max_{\delta \in (0,1)} g(\delta),
\end{equation*}
\begin{equation*}
g(\delta) = (\delta \Gamma)^\frac{1+p-p(m+\beta)}{(1+p)(1-\beta)} \left[ 1 - \delta \Gamma - \left( \frac{C}{C_*} \right)^{mp-\beta} \left( \frac{1}{1-\delta \Gamma} \right)^p \right]^\frac{mp-\beta}{(1+p)(1-\beta)},
\end{equation*}
\begin{equation*}
 \ell_2 = C^\frac{\beta-mp}{1+p} \left[ \frac{b(1-\beta)}{\delta_* \Gamma} \left( 1 - \delta_* \Gamma - \left( \frac{C}{C_*} \right)^{mp-\beta} \left( \frac{1}{1-\delta_* \Gamma} \right)^p \right) \right]^\frac{mp-\beta}{(1+p)(1-\beta)}.
\end{equation*}
V. $\beta>mp$ \\
\begin{small}\[C_5 = (1- \epsilon)^\frac{1}{1-mp} D,\]
\[C_6 = \bigg(\frac{\alpha(1-mp)^{p+1}}{\mu_b(1+p+\alpha(1-mp))(m(1+p))^p p (m+1)}\bigg)^\frac{1}{mp-1},\]
\[\xi_1 = (A_0-\epsilon)^{(mp-1)/(1+p)}(1-\epsilon)^{1/(1+p)}D^{(1-mp)/(1+p)}, \text{ if } b>0, \, 1 \leq \beta < (p(1-m)+2)/(1+p),\]
\[\xi_1 = (A_0-\epsilon)^{(mp-1)/(1+p)}D^{(1-mp)/(1+p)}, \text{ if } b>0, \, \beta \geq (p(1-m)+2)/(1+p) \text{ or } b<0, \, \beta\geq 1,\]
\[\xi_2 = \bigg(\frac{A_0+\epsilon}{C_6}\bigg)^{\frac{mp-1}{1+p}},\]
\[A_0 = f(0)>0,\]
\[\mu_b = \begin{cases}
1,\text{ if }b>0,\\ 
1+\epsilon,\text{  if } b<0,
\end{cases}\]\end{small}
\[ \zeta_8 = \left[ b(1-\beta) C_*^{\beta-1} (1-\epsilon)^{mp-1} (1-(1-\epsilon)^{\beta-mp}) \right]^\frac{mp-\beta}{(1+p)(1-\beta)}, \]
\[\xi_3 = (A_0/D)^{(mp-1)/(1+p)},\]
\[\xi_4 = \xi_3(1+(1+p)/\alpha(1-mp))^{1/(1+p)},\]
\[C_7=D(1+(1+p)/\alpha(1-mp))^{1/(1+mp)}.\]
	
	\FloatBarrier
      	\bibliographystyle{plain}
	\bibliography{references}
\end{document}